\newcommand{\mR}{\mathbb{R}}                    
\newcommand{\abs}[1]{\lvert #1 \rvert}          
\def\tilde{\widetilde}
\def\ID{{\mathbb{D}}}
\def\S2{{\bf S}(2)}
\def\1ton{1,2,\ldots,n}
\def\det{{\rm det}\,}
\def\II{{\bf I}}
\def\S{{\mathcal S}}
\def\p{\mbox{{\boldmath $p$}}}
\def \beq {\begin {eqnarray}}
\def \eeq {\end {eqnarray}}
\def \ba {\begin {eqnarray*}}
\def \ea {\end {eqnarray*}}
\def\p{\partial}
\newcommand{\closure}[1]{\overline{#1}}
\newtheorem{thm}{Theorem}[section]
\newtheorem{prop}[thm]{Proposition}
\newtheorem{lemma}[thm]{Lemma}
\theoremstyle{definition}
\newtheorem*{definition}{Definition}
\newtheorem*{remark}{Remark}
\numberwithin{equation}{section}
\title[Inverse problems for FEM and resistor networks]
{Inverse problems and invisibility cloaking for FEM models and resistor networks}
\author{Matti Lassas}
\address{Matti Lassas, Department of Mathematics and Statistics, Univ. of Helsinki,
Finland}
\author{Mikko Salo}
\address{Mikko Salo, Department of Mathematics and Statistics, Univ. of Jyv\"askyl\"a,
Finland}
\author{Leo Tzou}
\address{Leo Tzou, Department of Mathematics and Statistics, Stockholm Univ.,
Sweden}
\date{July 5, 2013}
\begin{document}

\begin{abstract} In this paper we consider inverse problems for resistor networks and for 
models obtained via the Finite Element Method (FEM) for the conductivity equation. 
These correspond to discrete versions of 
the inverse conductivity problem of Calder\'on.
We characterize
FEM models corresponding to a given triangulation of the domain 
that are equivalent to certain resistor networks, 
and apply the results to study nonuniqueness of the discrete inverse problem.
It turns out that the degree of nonuniqueness for the discrete problem is larger
than the one for the partial differential equation. We also study invisibility cloaking
for FEM models, and show how an arbitrary body can be surrounded with a layer so
that the cloaked body has the same boundary measurements as a given background medium.
\end{abstract}
\maketitle


\section{Introduction} \label{sec_intro}

\subsection{FEM and resistor networks}
In this paper we consider inverse problems for FEM models of the (anisotropic) conductivity equation, and for resistor networks. Let $\Omega \subset \mR^2$ be the interior of a simple polygon, let $\sigma \in L^{\infty}(\Omega, \mR^{2 \times 2})$ be a symmetric positive definite matrix function, and consider the Dirichlet problem for the conductivity equation 
$$
\nabla \cdot \sigma \nabla u = 0 \text{ in } \Omega, \qquad u|_{\partial \Omega} = f.
$$
If one is given a triangulation of $\closure{\Omega}$, then a Finite Element Method (FEM) model, based on piecewise affine basis functions, for this Dirichlet problem is a matrix equation 
\begin{equation} \label{fem_equation_intro}
\left( \begin{array}{cc} I_B & 0 \\ L' & L'' \end{array} \right) u = \left( \begin{array}{cc} f \\ 0 \end{array} \right)
\end{equation}
where $u = (u_1 \cdots u_N)^t \in \mR^N $ corresponds to values of the FEM solution at the vertices of the triangulation, $f = (f_1 \cdots f_B)^t$ corresponds to the boundary value $f$ at the boundary vertices, and the matrix $(L' \ L'')$ depends on $\sigma$ and the triangulation and has rows whose elements add up to zero.

On the other hand, if one considers the triangulation of $\closure{\Omega}$ as a graph, and if to each edge of the graph one assigns a resistor with a given conductivity, then the Dirichlet problem for this resistor network is to find a solution $u = (u_1 \cdots u_N)^t$, corresponding to voltages at each vertex, of the matrix equation 
\begin{equation}  \label{rn_equation_intro}
\left( \begin{array}{cc} I_B & 0 \\ \tilde{L}' & \tilde{L}'' \end{array} \right) u = \left( \begin{array}{cc} f \\ 0 \end{array} \right)
\end{equation}
where $f = (f_1 \cdots f_B)^t$ corresponds to voltages at the boundary vertices. 
Also here, the rows of the matrix $(\tilde{L}' \ \tilde{L}'')$, which depends on the resistors, add up to zero.

Thus, formally, the FEM model and resistor networks lead to the same kind of matrix equation.
This has also been pointed out in \cite{Lionheart}. In this paper we will show that there is a sort of equivalence between the two. Given an anisotropic conductivity in a triangulated domain $\Omega$, one can assign resistors (with possibly negative conductivity) to each edge in the triangulation, in such a way that the equations for the FEM model and resistor network are the same (and therefore have the same solution). Conversely, given a resistor network corresponding to a triangulated domain $\Omega \subset \mR^2$, one can find an anisotropic conductivity (not necessarily positive definite) which is constant in each triangle, such that the equations for the resistor network and the FEM model again coincide. Moreover we characterize all such conductivities for the FEM model that correspond to the given resistor network.

In this paper, we actually do not consider the inverse problem of recovering a conductivity from boundary measurements in the discrete models. Rather, we will use the equivalence of FEM models and resistor networks to describe a new type of nonuniqueness in the inverse problem for the FEM model. We will also describe invisibility cloaking constructions in the discrete case. These results highlight a certain difference when compared with known results for the continuum inverse problem, i.e.\ the inverse problem for  the conductivity equation $\nabla\,\cdotp\sigma \nabla u = 0$, which is a partial differential equation (PDE). To discuss the difference, let us first recall some results related to the PDE problem.

\subsection{Inverse problem for the conductivity equation.}
The inverse conductivity problem, also known as Calder\'on's inverse problem, is the question  whether an  unknown conductivity distribution inside a domain
can be determined  from voltage and current
measurements made on its boundary.
The measurements correspond to the knowledge of the
Dirichlet-to-Neumann map  $ \Lambda_\sigma $ (the continuous voltage-to-current map) associated with a matrix conductivity $\sigma = (\sigma^{jk})_{j,k=1}^2$. This map takes the Dirichlet boundary
values of the solution of the conductivity equation
\begin{equation}\label{johty}
\nabla\cdot \sigma(x) \nabla u(x) =\sum_{j,k=1}^2 
\frac \p{\p x^j}\bigg(\sigma^{jk}(x) \frac \p{\p x^k} u(x) \bigg)= 0
\end{equation}
 to the corresponding Neumann boundary values,
\beq\label{eq: DN map}
 \Lambda_\sigma :   u|_{\p \Omega}\mapsto \nu \, \,\cdotp \sigma \nabla u|_{\p \Omega}.
\eeq
In the classical theory of the problem, the conductivity $\sigma$
is measurable and bounded uniformly from above and below by positive constants. We say that the conductivity $\sigma$ is \emph{isotropic} if it is a scalar function times the identity matrix, and otherwise \emph{anisotropic}.

We first discuss known results for isotropic conductivities. The inverse conductivity problem was proposed by Calder\'on \cite{C} in 1980. In dimensions $n \geq 3$, Sylvester and Uhlmann \cite{SU} proved unique identifiability of conductivities which are $C^2$ smooth, and Haberman and Tataru  \cite{HabermanTataru} proved uniqueness for $C^1$ conductivities. For earlier results for nonsmooth conductivities, see \cite{GLU1,PPU}. In two dimensions, the first global uniqueness result is due to Nachman \cite{N1}   for conductivities with two derivatives by the $\overline \p$ technique.
Astala and P\"aiv\"arinta  \cite{AP} established uniqueness for
$L^{\infty}$ conductivities which are bounded from above and below by positive constants.  See \cite{U_IP} for further references, and for numerical implementations see 
\cite{Siltanen1,Siltanen2,MuellerSiltanenBOOK,regulDbar}. 

In the case of anisotropic, i.e.~matrix-valued,
conductivities that
are uniformly bounded from above and below, there is a known obstruction to uniqueness in the Calder\'on problem \cite{KV}. If $\Phi:\overline\Omega\to
\overline\Omega$ is a diffeomorphism with $\Phi|_{\partial \Omega} = \mathrm{Id}$, it follows that 
$$
\Lambda_{\Phi_* \sigma} = \Lambda_{\sigma}
$$
where $\Phi_* \sigma$ is the pushforward of $\sigma$ under $\Phi$ ($D\Phi$ is the Jacobian matrix), 
$$
(\Phi_* \sigma)(y) = \frac{(D\Phi(x)) \sigma(x) (D\Phi(x))^t}{\abs{\det D\Phi(x)}}\Big|_{x=\Phi^{-1}(y)}.
$$
In two dimensions \cite{S, N1,LU,  ALP,IUY} it is
known that the Dirichlet-to-Neumann map $\Lambda_{\sigma}$ determines a regular conductivity tensor $\sigma$ up to such a
diffeomorphism. This means that one can obtain an image of the interior of $\Omega$ in
deformed coordinates. 
Thus, the inverse problem is not uniquely
solvable, but the nonuniqueness of the problem can be characterized. The same problem in dimensions $n \geq 3$ is open, see \cite{LeU, LU, LTU, GS, DKSaU, DKLSa} for partial results.

The
inverse conductivity problem
in the two dimensional case has also been studied with  
degenerate, that is, non-regular, conductivities. 
Such conductivities appear in physical models where the medium
varies continuously from a perfect conductor to a perfect insulator.
As an 
example, we may consider the case where the conductivity goes to
zero or to infinity near $\p D$ where $D\subset\Omega$ is a smooth open set.
The paper \cite{ALP2} characterizes the kinds of degeneracies for which one has nonuniqueness in the
inverse problem (the limit of visibility). The same paper also determines the cases where it is even possible 
to coat of an arbitrary object so that it appears like a homogeneous  body in all static boundary measurements (the limit of invisibility cloaking). 
Thus, the degree of nonuniqueness in the inverse conductivity equation
is quite well understood in two dimensions. Surprisingly, in the discrete problems that we study in
this paper a new type of nonuniqueness appears. 

\subsection{Equivalence of FEM models and resistor networks}

We will now state the first main theorems of this paper concerning the equivalence of FEM models and resistor networks. To keep this introduction brief we will not describe the discrete models in full detail, but rather refer to Sections \ref{sec_fem} and \ref{sec_resistor} for precise definitions.

For the FEM model, let $\Omega \subset \mR^2$ be the interior of a simple polygon and let $\mathscr{T}$ be a triangulation of $\overline{\Omega}$. We consider conductivity matrices from the class 
\begin{align*}
PC(\mathscr{T}) = \{ \sigma \in L^{\infty}(\Omega, \mR^{2 \times 2}) \,;\, & \sigma \text{ is a symmetric constant matrix } \\
 &\text{in the interior of each triangle} \},
\end{align*}
and also from the class $PC_+(\mathscr{T})$ which consists of those matrices in $PC(\mathscr{T})$ that are strictly positive definite in each triangle. 
Below, we define a {\it FEM model} to be the pair
 $(\mathscr{T},\sigma)$ where $\mathscr{T}$ is a
 triangulation of a simple polygon and
  $\sigma\in PC(\mathscr{T})$. 
The FEM model for the conductivity equation leads to an equation of the form \eqref{fem_equation_intro}, where the matrix on the left hand side will be denoted by $L(\sigma)$. If $L(\sigma)$ is invertible, we say that $0$ is not a Dirichlet eigenvalue of the FEM model (if $\sigma \in PC_+(\mathscr{T})$ this is always true). In this case, there are natural boundary measurements for the FEM model that are given by the Dirichlet-to-Neumann map 
$$
\Lambda_{\sigma}^{FEM} = \Lambda_{\sigma}^{FEM(\mathscr{T})}: \mR^B \to \mR^B
$$
where $B$ is the number of boundary vertices.

We next consider resistor networks. By definition, a resistor network is a graph $(V,E)$ together with a distinguished set $V_B \subset V$ a map $\gamma: E \to \mR$. Here $V = \{ x_1, \ldots, x_N \}$ is some finite set, called the set of vertices, $E \subset \{ \{x,y\} \,;\, x, y \in V\}$ is the set of edges, $V_B$ is the set of boundary vertices, and $\gamma(e)$ is thought of as the conductivity of a resistor that occupies the edge $e$. The Dirichlet problem for the resistor network is a matrix equation of the form \eqref{rn_equation_intro}, where the matrix on the left is denoted by $\tilde{L}(\gamma)$. If $\tilde{L}(\gamma)$ is invertible we say that $0$ is not a Dirichlet eigenvalue of the resistor network, and one has boundary measurements for the resistor network given by a Dirichlet-to-Neumann map 
$$
\Lambda_{\gamma}: \mR^B \to \mR^B.
$$
It turns out that there is a canonical way of going from a given FEM model to a resistor network having the same boundary measurements.

\begin{thm} \label{thm_main_fem_to_resistor}
Let $\Omega \subset \mR^2$ be the interior of a simple polygon, let $\mathscr{T}$ be a triangulation of $\overline{\Omega}$ having vertices $V$, boundary vertices $V_B$, and edges $E$, and let $\sigma \in PC_+(\mathscr{T})$. There is a unique function $\gamma$ such that the resistor network $R = (V,V_B,E,\gamma)$ satisfies $\tilde{L}(\gamma) = L(\sigma)$ and $\Lambda_{\gamma} = \Lambda_{\sigma}^{\mathrm{FEM}}$.
\end{thm}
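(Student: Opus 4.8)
The plan is to construct $\gamma$ explicitly from the assembled FEM stiffness matrix and then verify that it reproduces both the matrix $L(\sigma)$ and the boundary map. Write $K(\sigma)$ for the full $N \times N$ stiffness matrix assembled from the element matrices $(K_T)_{ij} = \int_T \nabla \phi_i \cdot \sigma_T \nabla \phi_j \, dx$, where the $\phi_i$ are the piecewise affine nodal basis functions, $\sigma_T$ is the constant conductivity on $T$, and the sum runs over triangles $T \in \mathscr{T}$; the matrix $L(\sigma)$ is obtained from $K(\sigma)$ by overwriting its first $B$ rows (the boundary-vertex rows) with $(I_B \ 0)$. Likewise, for a resistor network the Kirchhoff matrix $K(\gamma)$ is the weighted graph Laplacian with off-diagonal entries $-\gamma(\{x_i,x_j\})$ on edges and diagonal entries $\sum_{j} \gamma(\{x_i,x_j\})$, and $\tilde{L}(\gamma)$ is $K(\gamma)$ with the same boundary rows overwritten. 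Thus it suffices to produce a $\gamma$ with $K(\gamma) = K(\sigma)$.

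For existence, the key structural fact is that for $i \neq j$ the entry $(K_T)_{ij}$ vanishes unless both $x_i, x_j$ are vertices of $T$, and in a triangulation two vertices share a triangle exactly when they span an edge; hence the off-diagonal support of $K(\sigma)$ equals the edge set $E$. This dictates the definition
$$ \gamma(\{x_i,x_j\}) := -K(\sigma)_{ij} = -\sum_{T \in \mathscr{T}} (K_T)_{ij}, \qquad \{x_i,x_j\} \in E, $$
which is well defined and symmetric because $\sigma_T$ symmetric makes each $K_T$, and hence $K(\sigma)$, symmetric. With this choice the off-diagonal entries of $K(\gamma)$ and $K(\sigma)$ agree, and both vanish off $E$. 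For the diagonal I would invoke the partition-of-unity identity $\sum_i \phi_i \equiv 1$, whence $\sum_i \nabla \phi_i \equiv 0$ and every row of $K(\sigma)$ sums to zero; therefore $K(\sigma)_{ii} = -\sum_{j \neq i} K(\sigma)_{ij} = \sum_{j} \gamma(\{x_i,x_j\})$, which is precisely the diagonal of $K(\gamma)$. Hence $K(\gamma) = K(\sigma)$ entrywise, so $\tilde{L}(\gamma) = L(\sigma)$, and since $\sigma \in PC_+(\mathscr{T})$ guarantees invertibility of $L(\sigma)$, the two Dirichlet-to-Neumann maps, being defined by the same formula from identical matrices, coincide.

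For uniqueness I would split the edges according to whether they touch an interior vertex. The condition $\tilde{L}(\gamma) = L(\sigma)$ fixes the interior rows of $K(\gamma)$, and reading off their off-diagonal entries determines $\gamma$ on every edge incident to at least one interior vertex. It does not, however, constrain $\gamma$ on boundary–boundary edges, since those contribute only to the boundary rows that get overwritten; this is where the map condition enters. Blocking $K(\gamma)$ into boundary and interior parts, the map is the Schur complement $\Lambda_\gamma = K_{BB} - K_{BI}K_{II}^{-1}K_{IB}$ with $K_{II} = L''$ invertible. The blocks $K_{IB} = L'$ and $K_{II} = L''$ (and $K_{BI} = K_{IB}^t$) are already determined by $\tilde{L}(\gamma) = L(\sigma)$, so fixing $\Lambda_\gamma = \Lambda_\sigma^{\mathrm{FEM}}$ determines $K_{BB}$, whose off-diagonal entries are exactly $-\gamma$ on the boundary–boundary edges. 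Thus $\gamma$ is pinned down everywhere.

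I do not expect a deep analytic obstacle; the theorem amounts to identifying the FEM assembly with the weighted-Laplacian assembly. The point that most deserves care — and the reason both conditions appear in the statement — is the uniqueness for boundary–boundary edges: the identity $\tilde{L}(\gamma) = L(\sigma)$ underdetermines $\gamma$ there, and it is precisely the Dirichlet-to-Neumann condition, via the Schur complement, that supplies the missing equations. Secondary bookkeeping points are verifying that shared-triangle adjacency coincides with graph adjacency (so no edge is spurious or missing) and noting that $\gamma$ is permitted to be negative, so no sign constraint can obstruct the construction.
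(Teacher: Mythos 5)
Your construction coincides with the paper's proof (given there for the more general Theorem \ref{thm_main_fem_to_resistor_general}): you set $\gamma_{jk} = -\int_{\Omega} \sigma \nabla v_j \cdot \nabla v_k \,dx$ on every edge, match the diagonal entries via the partition-of-unity identity $\sum_k v_k \equiv 1$, and use the Dirichlet-to-Neumann condition to fix $\gamma$ on the boundary rows, so the proposal is correct and takes essentially the same route. The one point where you go beyond the paper is making the uniqueness on boundary--boundary edges explicit through the Schur-complement identity $\Lambda_{\gamma} = K_{BB} - K_{BI}K_{II}^{-1}K_{IB}$, a step the paper leaves implicit in its choice of the boundary-row entries; this is a clarification of the same argument rather than a different approach.
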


Even if any FEM model with given triangulation naturally gives rise to a resistor network, there are many ways of going from the resistor network back to the FEM model. This may be described by going from the resistor network to a ''double resistor network'' where the resistor corresponding to each interior edge has been split in two parts. The splitting is determined by a real valued function $\alpha$ defined on the set of interior edges. It turns out that there is an essentially unique way of going from this double resistor network back to a FEM model with conductivity matrix $\sigma_{\alpha}$ that is constant on each triangle of the original triangulation and has the same Dirichlet-to-Neumann map as the original FEM model. The detailed construction of $\sigma_{\alpha}$ is given in Section \ref{sec_equivalence}. The conductivities that arise in this way are not always positive definite, but we will later also give conditions that guarantee positive definiteness.

\begin{thm}
\label{network fem equivalence}
Let $(V,E,V_B,\gamma)$ be a resistor network that corresponds to a triangulation $\mathscr{T}$, and assume that $0$ is not a Dirichlet eigenvalue. For any real valued function $\alpha$ defined on the set of interior edges, there is a symmetric matrix function $\sigma_{\alpha} \in PC(\mathscr{T})$ such that $L(\sigma_{\alpha}) = \tilde{L}(\gamma)$ and $\Lambda_{\sigma_{\alpha}}^{\text{FEM}} = \Lambda_{\gamma}$. Moreover, any $\sigma \in PC(\mathscr{T})$ that satisfies $L(\sigma) = \tilde{L}(\gamma)$ and $\Lambda_{\sigma}^{\text{FEM}} = \Lambda_{\gamma}$ is of the form $\sigma = \sigma_{\alpha}$ for some such function $\alpha$.
\end{thm}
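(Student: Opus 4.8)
The plan is to reduce both assertions to a single linear-algebra fact about one triangle and then to assemble the global statement edge by edge. First I would recall that, because $\sigma$ is constant on each triangle and the FEM basis functions $\phi_i$ are affine, the full assembled stiffness matrix $\mathbf{K}(\sigma)$ (whose interior rows form the block $(L'\ L'')$) is a sum of local contributions $\mathbf{K}(\sigma)=\sum_T K_T$, where $(K_T)_{ij}=\abs{T}\, g_i\cdot\sigma_T g_j$ with $g_i=\nabla\phi_i|_T$. Since $\phi_1+\phi_2+\phi_3\equiv 1$ on $T$ we have $g_1+g_2+g_3=0$, which is precisely why every row of $K_T$, hence of $\mathbf{K}(\sigma)$ and of $\tilde L(\gamma)$, sums to zero. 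For an edge $e=\{i,j\}$ of $T$ I set the local contribution $c_T(e)=-(K_T)_{ij}$; then the off-diagonal entry of $\mathbf{K}(\sigma)$ on an edge is $-\sum_{T\ni e}c_T(e)$, the sum running over the one or two triangles containing $e$.

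The heart of the argument is the claim that on a single nondegenerate triangle the linear map sending a symmetric $2\times2$ matrix $\sigma_T$ to its three edge contributions $(c_T(e_1),c_T(e_2),c_T(e_3))$ is a bijection onto $\mR^3$. Both spaces are three-dimensional, so it suffices to check injectivity, which I would do by computing the kernel: if the three pairings $g_1\cdot\sigma_T g_2$, $g_1\cdot\sigma_T g_3$, $g_2\cdot\sigma_T g_3$ all vanish, then substituting $g_3=-(g_1+g_2)$ forces $g_1\cdot\sigma_T g_1=g_2\cdot\sigma_T g_2=g_1\cdot\sigma_T g_2=0$ as well; since $g_1,g_2$ form a basis of $\mR^2$, the symmetric bilinear form $u\cdot\sigma_T v$ vanishes identically and $\sigma_T=0$. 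Consequently, prescribing the three edge contributions of each triangle determines a unique symmetric $\sigma_T$, and hence a unique $\sigma\in PC(\mathscr{T})$.

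For the existence statement I would use $\alpha$ to split $\gamma$. For each interior edge $e$ shared by two triangles $T,T'$ the value $\alpha(e)$ prescribes a splitting $\gamma(e)=c_T(e)+c_{T'}(e)$, for instance $c_T(e)=\tfrac12\gamma(e)+\alpha(e)$ and $c_{T'}(e)=\tfrac12\gamma(e)-\alpha(e)$ (the precise convention being the one fixed in Section \ref{sec_equivalence}); for each boundary edge $e$, contained in a single triangle $T$, there is no freedom and I set $c_T(e)=\gamma(e)$. Every triangle now has its three edge contributions prescribed, so the local bijection produces a unique symmetric $\sigma_\alpha\in PC(\mathscr{T})$. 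By construction the off-diagonal entry of $\mathbf{K}(\sigma_\alpha)$ on each edge equals $-\gamma(e)$, matching the corresponding entry of $\tilde L(\gamma)$; since both matrices have all rows summing to zero, their diagonal entries then agree automatically, so $\mathbf{K}(\sigma_\alpha)=\tilde L(\gamma)$ as full matrices. Reading off the interior rows gives $L(\sigma_\alpha)=\tilde L(\gamma)$, and because the Dirichlet-to-Neumann map is the Schur complement of the full assembled matrix onto the boundary vertices, equality of the full matrices gives $\Lambda_{\sigma_\alpha}^{\text{FEM}}=\Lambda_\gamma$.

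For the converse, suppose $\sigma\in PC(\mathscr{T})$ satisfies $L(\sigma)=\tilde L(\gamma)$ and $\Lambda_\sigma^{\text{FEM}}=\Lambda_\gamma$. The first identity fixes the interior-interior and interior-boundary blocks of $\mathbf{K}(\sigma)$; by symmetry of the assembled matrices the boundary-interior block is fixed as well, and then equality of the Schur complements $\Lambda_\sigma^{\text{FEM}}=\Lambda_\gamma$ forces the remaining boundary-boundary block to agree, so again $\mathbf{K}(\sigma)=\tilde L(\gamma)$. Thus for every edge the sum of the local contributions $c_T(e)$ coming from $\sigma$ equals $\gamma(e)$: on boundary edges this pins the single contribution to $\gamma(e)$, while on interior edges it defines the value $\alpha(e)$ through the splitting convention above. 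The local bijection then identifies $\sigma$ with $\sigma_\alpha$ triangle by triangle. The step I expect to be the main obstacle is the local bijection itself, both establishing it cleanly via the kernel computation and using it to organize the global bookkeeping; in particular one must notice that the boundary-boundary entries of the assembled matrix are genuinely needed, which is exactly why the hypothesis $\Lambda_\sigma^{\text{FEM}}=\Lambda_\gamma$, and not merely $L(\sigma)=\tilde L(\gamma)$, must enter in the converse.
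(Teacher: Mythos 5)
Your proposal is correct and follows essentially the same route as the paper: the same per-triangle $3\times 3$ linear system with the identical kernel computation (using $g_1+g_2+g_3=0$) to get the local bijection, the same $\tfrac12\gamma\pm\alpha$ splitting on interior edges with boundary edges pinned, and the same verification that matching all edge entries plus zero row sums forces the matrices and Dirichlet-to-Neumann maps to coincide. The only cosmetic difference is that you package the boundary bookkeeping via the Schur complement of the full assembled matrix, which is a clean way of making explicit why $\Lambda_{\sigma}^{\mathrm{FEM}}=\Lambda_{\gamma}$ (and not just $L(\sigma)=\tilde{L}(\gamma)$) is needed in the converse to pin down the boundary--boundary entries, a point the paper handles by writing out the maps directly.
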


We mention here that inverse problems for resistor networks have been extensively studied using graph theoretical methods \cite{BD,CM}. These studies have focused on two different types of problems. First, one can model a body with a fixed resistor network, e.g.\ a square lattice  corresponding to Euclidean coordinates or a circular lattice corresponding to polar coordinates  \cite{CdV1,CdV2,CIM,CMM,Ing}  and ask  whether one can reconstruct the resistors from boundary measurements. Second, one can consider arbitrary graphs with a subset of the nodes declared as the boundary nodes and use the fact that the resistor network can be modified by applying a so-called $\Delta-Y$ transformation (or the triangle-star transformation) in the graph without changing the boundary measurements. In this transformation the graph structure, in particular the number of the nodes, changes. In these studies one considers e.g.~the planarity of the graph and the characterization of equivalence classes of the graphs that appear the same in boundary measurements \cite{CdV1,CdV2,CIM}. Our paper falls closer to the first category of problems as we consider triangulations of a given domain which correspond to a fixed abstract graph.

\subsection{Nonuniqueness for FEM models.}
To consider nonuniqueness in the inverse problem for FEM models, let us start from
the  familiar nonuniqueness related to the diffeomorphism invariance of the conductivity equation $\nabla\,\cdotp\sigma \nabla u=0$. As pointed out above, the Dirichlet-to-Neumann maps for $\sigma$ and a pushforward conductivity $\Phi_* \sigma$ coincide if $\Phi: \overline{\Omega} \to \overline{\Omega}$ is a diffeomorphism fixing the boundary. If one approximates the conductivity equation by the FEM model and considers the corresponding resistor network, then the resistor network is an abstract graph which may be embedded in $\mR^2$ in several ways. These different embeddings (which may for instance move the interior vertices around) can be thought to correspond to the diffeomorphisms $\Phi$, and the use of resistor networks ''factors out'' the diffeomorphism. Thus the graph structure associated with the resistor network can be considered
as the analogue of the underlying Riemannian manifold structure associated with the 
anisotropic conductivity, cf.~\cite{LTU,LU,LeU}.

We will next formalize the analogue of diffeomorphism nonuniqueness. Two triangulations $\mathscr{T}$ and $\mathscr{T}'$ of $\overline{\Omega}$ are said to be isomorphic via a map $\Phi$ if $\Phi: \overline{\Omega} \to \overline{\Omega}$ is an orientation preserving homeomorphism that gives a $1$-$1$ correspondence between triangles of $\mathscr{T}$ and $\mathscr{T}'$ and is affine on each triangle. If $\sigma \in PC_+(\mathscr{T})$ and if $\Phi$ is an isomorphism of triangulations $\mathscr{T}$ and $\mathscr{T}'$, we define the pushforward of $\sigma$ via $\Phi$ by 
$$
\Phi_* \sigma(y) = \frac{(D\Phi(x)) \sigma(x) (D\Phi(x))^t}{\abs{\det D\Phi(x)}}\Big|_{x=\Phi^{-1}(y)},
$$
where $y$ is in the interior of some triangle of $\mathscr{T}'$ (see Fig.\ 1).

\begin{thm} \label{main_thm_diffeo_invariance}
Let $\mathscr{T}$ and $\mathscr{T}'$ be two triangulations of $\Omega \subset \mR^2$ that are isomorphic via some $\Phi: \overline{\Omega} \to \overline{\Omega}$ with $\Phi|_{\partial \Omega} = \mathrm{Id}$. If $\sigma \in PC_+(\mathscr{T})$, then we have 
$$
\Lambda_{\sigma}^{FEM(\mathscr{T})} = \Lambda_{\Phi_* \sigma}^{FEM(\mathscr{T}')}.
$$
\end{thm}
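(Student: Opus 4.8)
The plan is to reduce the claim to a single statement: once the vertices of $\mathscr{T}$ and $\mathscr{T}'$ are identified through $\Phi$, the two full FEM stiffness matrices coincide. The equality of Dirichlet-to-Neumann maps is then immediate, because each $\Lambda^{FEM}$ is an explicit function of its stiffness matrix (a Schur complement of the interior block, which is invertible since $0$ is not a Dirichlet eigenvalue). So the real content is an invariance property of the energy form $u \mapsto \int_\Omega \nabla u \cdot \sigma \nabla u\,dx$ restricted to the piecewise affine FEM space.

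First I would record the combinatorial consequences of the hypotheses. Since $\Phi$ is an orientation preserving homeomorphism that is affine on each triangle and sets up a $1$-$1$ correspondence between the triangles of $\mathscr{T}$ and $\mathscr{T}'$, it must carry vertices to vertices and edges to edges, and it preserves the boundary/interior splitting of vertices; write $x_i' = \Phi(x_i)$ for the induced bijection of vertices. Because $\Phi|_{\partial\Omega} = \mathrm{Id}$, every boundary vertex is fixed, so the two models share the same boundary vertices and the induced identification $\mR^B \to \mR^B$ is the identity on boundary data. I would also note here that, triangle by triangle, $\Phi_*\sigma = \frac{J\sigma J^t}{\abs{\det J}}$ with $J = D\Phi$ constant and invertible and $\det J > 0$, so $\Phi_*\sigma \in PC_+(\mathscr{T}')$ and the right-hand Dirichlet-to-Neumann map is well defined.

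The main step is the transformation of the nodal basis functions and of the energy form. Let $\phi_i$ be the piecewise affine nodal basis function of $\mathscr{T}$ at $x_i$. Since $\Phi$, hence $\Phi^{-1}$, is affine on each triangle, the composition $\phi_i \circ \Phi^{-1}$ is piecewise affine on $\mathscr{T}'$ and takes the value $\delta_{ij}$ at $x_j'$; thus it is exactly the nodal basis function $\phi_i'$ of $\mathscr{T}'$ at $x_i'$. I would then compute the stiffness entries of $(\mathscr{T}', \Phi_*\sigma)$ triangle by triangle. On the interior of a triangle $\Phi$ is a genuine affine diffeomorphism with constant Jacobian $J$, so the chain rule gives $\nabla_y\phi_i' = (J^t)^{-1}\nabla_x\phi_i$, and the definition of the pushforward yields the pointwise identity
\[
\nabla_y \phi_i' \cdot (\Phi_*\sigma)\, \nabla_y \phi_j' = \frac{1}{\abs{\det J}}\, \nabla_x\phi_i \cdot \sigma\, \nabla_x\phi_j .
\]
Integrating over the triangle and absorbing the factor $\abs{\det J}$ into $dy = \abs{\det J}\,dx$ shows the two triangle contributions agree; summing over all triangles gives
\[
\int_\Omega \nabla \phi_i' \cdot (\Phi_*\sigma)\, \nabla \phi_j' \, dy = \int_\Omega \nabla \phi_i \cdot \sigma\, \nabla \phi_j \, dx
\]
for all $i,j$, i.e.\ the full stiffness matrices coincide after the vertex identification.

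Finally I would conclude: equality of the stiffness matrices gives $L(\Phi_*\sigma) = L(\sigma)$ (the interior rows agree and the boundary rows are the identity block in both), and since the FEM Dirichlet-to-Neumann map is determined by the stiffness matrix via its Schur complement, the two maps are equal, which is the assertion. The one genuinely delicate point, which I expect to be the crux of the write-up, is that $\Phi$ is only a homeomorphism that is \emph{piecewise} affine, so both $D\Phi$ and $\nabla\phi_i$ jump across edges and the classical global change of variables is unavailable. The resolution is that every integral splits over the open triangles, on each of which $\Phi$ restricts to an honest affine diffeomorphism onto the corresponding triangle of $\mathscr{T}'$ with constant Jacobian, so the change of variables is exact there, while the skeleton of edges and vertices has measure zero and contributes nothing.
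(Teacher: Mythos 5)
Your proposal is correct and follows essentially the same route as the paper: the paper's proof of its Lemma in Section \ref{sec_fem} likewise identifies the nodal bases via $v_k'(y) = v_k(\Phi^{-1}(y))$, applies the chain rule $\nabla v_k'(y) = D\Phi(\Phi^{-1}(y))^{-t}\nabla v_k(\Phi^{-1}(y))$, and uses the change of variables $y = \Phi(x)$ to conclude $l_{jk}(\Phi_*\sigma) = l_{jk}(\sigma)$, hence $L(\Phi_*\sigma) = L(\sigma)$ and equality of the Dirichlet-to-Neumann maps. Your explicit remark that the change of variables must be performed triangle by triangle (with the measure-zero skeleton contributing nothing) is a fair point the paper leaves implicit, but it is the same argument.
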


Thus diffeomorphism invariance in the FEM model looks basically the same as in the continuum model. However, there is another type of nonuniqueness in the discrete model which is not present in the continuous case. As mentioned above, any FEM model with given triangulation naturally gives rise to a resistor network, but there are many ways of going from the resistor network back to the FEM model. This involves going from the resistor network to a ''double resistor network'' where the resistor corresponding to each interior edge has been split in two parts. The different splittings will be parametrized by real valued functions $\beta$ on the set of interior edges, and these splittings account for the other type of nonuniqueness.

To make this precise, we consider the directed graphs associated with triangulations. Let $\mathscr{T}$ be a triangulation of $\Omega$ and let $\sigma \in PC_+(\mathscr{T})$. The non-directed graph associated with $\mathscr{T}$ is the pair $(V,E)$ where $V = \{ x_1, \ldots, x_N \}$ is the set of all vertices of triangles in $\mathscr{T}$ and $E$ is the set of non-directed edges. The directed graph associated with $\mathscr{T}$ is the pair $(V,D)$ where $V$ is the set of vertices as above, and $D$ is the set of all ordered pairs $(x_j, x_k)$ such that the line segment from $x_j$ to $x_k$ is a positively oriented edge of some triangle in $\mathscr{T}$. (The triangles of $\mathscr{T}$ and their boundaries have an orientation induced by the standard orientation of $\mR^2$.)

We write $E_B$ for the set of non-directed boundary edges (those edges that belong to only one triangle), and $E_I$ for the set of non-directed interior edges (those edges that belong to exactly two triangles). We assume that the elements of $V$ are labeled so that the set of boundary vertices (those vertices that lie on $\partial \Omega$) is $V_B = \{ x_1, \ldots, x_B \}$. Write $V_I = V \setminus V_B$ for the interior vertices.

 Let  $\beta: E_I \to \mR$ be a real valued function, and let $\tilde{\sigma}$ be the matrix function in $\Omega$ satisfying the following conditions:

\begin{enumerate}
\item[(i)] 
$\tilde{\sigma}$ is a constant matrix in the interior of each triangle in $\mathscr{T}$,
\item[(ii)]
if $e = \{x_j,x_k\} \in E_I$ and $j < k$, and if $T$ and $T'$ are the two triangles of $\mathscr{T}$ having both $x_j$ and $x_k$ as vertices such that $\partial T$ has the same orientation as the directed edge $d = (x_j,x_k)$ (so then $\partial T'$ has the opposite orientation from $d$), then 
\begin{align*}
\int_T \tilde{\sigma} \nabla v_j \cdot \nabla v_k \,dx &= \int_T \sigma \nabla v_j \cdot \nabla v_k \,dx + \beta(e), \\
\int_{T'} \tilde{\sigma} \nabla v_j \cdot \nabla v_k \,dx &= \int_{T'} \sigma \nabla v_j \cdot \nabla v_k \,dx - \beta(e),
\end{align*}

\item[(iii)]
if $e = \{x_j, x_k\} \in E_B$, and if $T$ is the unique triangle which has both $x_j$ and $x_k$ as vertices, then 
\begin{equation*}
\int_T \tilde{\sigma} \nabla v_j \cdot \nabla v_k \,dx = \int_T \sigma \nabla v_j \cdot \nabla v_k \,dx.
\end{equation*}
\end{enumerate}

It will turn out (see Section \ref{sec_equivalence}) that there is indeed a unique matrix function $\tilde{\sigma}$ in $\Omega$ that satisfies the above conditions, and we write $S_{\beta}(\sigma) = \tilde{\sigma}$. Clearly $S_{\beta}(\sigma) \in PC(\mathscr{T})$, and we also have $S_0(\sigma) = \sigma$. Note that here the conductivity $S_{\beta}(\sigma)$ may not be positive definite, even if $\sigma\in PC_+(\mathscr{T})$, but it is always such that $0$ is not a Dirichlet eigenvalue of the FEM problem and there is a well defined Dirichlet-to-Neumann map. The next result makes precise the second type of nonuniqueness that is present in the discrete model.

\begin{thm} \label{thm_main_nonuniqueness1}
Let $\mathscr{T}$ be a triangulation of $\Omega$ and let $\sigma \in PC_+(\mathscr{T})$. For any real valued function $\beta$ on $E_I$, there exists a unique conductivity matrix $\tilde{\sigma} = S_{\beta}(\sigma) \in PC(\mathscr{T})$ that satisfies (i)--(iii). One also has  
$$
\Lambda_{\sigma}^{FEM(\mathscr{T})} = \Lambda_{S_{\beta}(\sigma)}^{FEM(\mathscr{T})}.
$$
\end{thm}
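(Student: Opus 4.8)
The plan is to treat the two assertions separately: first the local existence and uniqueness of $\tilde\sigma = S_\beta(\sigma)$, and then the invariance of the boundary measurements. Since a matrix in $PC(\mathscr{T})$ amounts to a free choice of a symmetric constant matrix on each triangle, I would establish (i)--(iii) triangle by triangle. Fix a triangle $T$ with vertices $x_i,x_j,x_k$ and write $g_i = \nabla v_i|_T$, $g_j = \nabla v_j|_T$, $g_k = \nabla v_k|_T$ for the constant gradients of the affine hat functions on $T$. For an edge $\{x_p,x_q\}$ of $T$ one has $\int_T \tilde\sigma \nabla v_p \cdot \nabla v_q\,dx = \abs{T}\,g_p^t \tilde\sigma_T g_q$, so conditions (ii)--(iii) prescribe exactly the three ``mixed'' numbers $g_i^t \tilde\sigma_T g_j$, $g_j^t \tilde\sigma_T g_k$, $g_i^t \tilde\sigma_T g_k$. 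As $\tilde\sigma_T$ ranges over the three-dimensional space of symmetric $2\times 2$ matrices, it suffices to check that the linear map sending $\tilde\sigma_T$ to this triple is a bijection. This follows from $g_i+g_j+g_k=0$ (partition of unity) and nondegeneracy of $T$, so that $\{g_i,g_j\}$ is a basis of $\mR^2$: substituting $g_k = -g_i-g_j$ and using symmetry, the three prescribed values uniquely determine $g_i^t \tilde\sigma_T g_i$, $g_i^t \tilde\sigma_T g_j$, $g_j^t \tilde\sigma_T g_j$, i.e.\ the full symmetric form, and the correspondence is reversible. This yields a unique $\tilde\sigma_T$ on each $T$, hence a unique $\tilde\sigma \in PC(\mathscr{T})$, which we denote $S_\beta(\sigma)$.

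For the invariance of boundary measurements, the key observation is that the $\beta$-terms only redistribute off-diagonal stiffness entries between the two triangles of an interior edge without changing their sum. Writing $a_{pq} = \int_\Omega \sigma \nabla v_p \cdot \nabla v_q\,dx$ and $\tilde a_{pq} = \int_\Omega \tilde\sigma \nabla v_p \cdot \nabla v_q\,dx$ for the entries of the two stiffness matrices $A$ and $\tilde A$, I would note that for $p\ne q$ only the triangles containing both $x_p$ and $x_q$ contribute. For an interior edge $e=\{x_p,x_q\}$ there are two such triangles $T,T'$, and adding the two equations in (ii) the contributions $+\beta(e)$ and $-\beta(e)$ cancel, so $\tilde a_{pq}=a_{pq}$; for a boundary edge this is immediate from (iii); and if $\{x_p,x_q\}$ is not an edge then both entries vanish. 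Hence $\tilde a_{pq}=a_{pq}$ for every $p\ne q$.

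It then remains to match the diagonal, and here I would use that both stiffness matrices have zero row sums. Since $\sum_q v_q \equiv 1$ we have $\sum_q \nabla v_q = 0$ on each triangle, so $\sum_q a_{pq} = \int_\Omega \sigma \nabla v_p \cdot \nabla(\sum_q v_q)\,dx = 0$, and likewise $\sum_q \tilde a_{pq}=0$; this uses only $\tilde\sigma \in PC(\mathscr{T})$, not positivity. Therefore $\tilde a_{pp} = -\sum_{q\ne p}\tilde a_{pq} = -\sum_{q\ne p} a_{pq} = a_{pp}$, so $\tilde A = A$. Because $L(\sigma)$ and $L(\tilde\sigma)$ are assembled from the same stiffness matrix (the interior-vertex rows of $A$, with the boundary rows replaced by $I_B$), we get $L(S_\beta(\sigma)) = L(\sigma)$. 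In particular $L(S_\beta(\sigma))$ is invertible since $\sigma \in PC_+(\mathscr{T})$, so $\Lambda_{S_\beta(\sigma)}^{FEM(\mathscr{T})}$ is well defined; and as the DN map depends on $\sigma$ only through this matrix (e.g.\ as the Schur complement of the interior block of $A$ onto the boundary vertices), equality of the matrices gives $\Lambda_\sigma^{FEM(\mathscr{T})} = \Lambda_{S_\beta(\sigma)}^{FEM(\mathscr{T})}$.

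The only genuinely computational step is the invertibility of the $3\times 3$ local system in the first paragraph, and I do not expect a serious obstacle there: conditions (i)--(iii) are engineered precisely so that $\beta$ shifts the off-diagonal contribution across an interior edge while preserving every entry $a_{pq}$, and the zero-row-sum identity then propagates this to the diagonal, leaving the entire assembled matrix—and hence all boundary data—unchanged.
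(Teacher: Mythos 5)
Your proposal is correct, and it reaches the result by a more direct route than the paper does. The paper proves this theorem as a corollary of its resistor-network machinery: it first builds the equivalent network with conductivities $\gamma_{jk} = -\int_\Omega \sigma \nabla v_j \cdot \nabla v_k \,dx$ (Theorem \ref{thm_main_fem_to_resistor_general}), then translates $\beta$ into a splitting function $\alpha(e') = -\int_T \sigma \nabla v_j \cdot \nabla v_k \,dx - \beta(e) - \frac{1}{2}\gamma_{jk}$ on directed interior edges, and invokes Theorem \ref{network fem equivalence2} both for the existence/uniqueness of $\sigma_\alpha$ and for $L(\sigma_\alpha) = \tilde{L}(\gamma) = L(\sigma)$, $\Lambda^{\mathrm{FEM}}_{\sigma_\alpha} = \Lambda_\gamma = \Lambda^{\mathrm{FEM}}_\sigma$. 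You bypass the network entirely and argue at the level of the assembled stiffness matrix: the $\pm\beta(e)$ contributions across the two triangles of each interior edge cancel in every off-diagonal entry $\tilde a_{pq}$, and the zero-row-sum identity (valid for any $\tilde\sigma \in PC(\mathscr{T})$, positivity not needed) forces the diagonal to match, so the full matrix $A$, hence $L$ and the Schur-complement DN map, are unchanged. Your local solvability argument also differs in form from the paper's: where the paper shows the homogeneous $3\times 3$ system on a triangle has only the trivial solution (via linear independence of two hat-function gradients), you reduce via $g_i + g_j + g_k = 0$ to recovering the full Gram data $g_i^t\tilde\sigma_T g_i$, $g_i^t\tilde\sigma_T g_j$, $g_j^t\tilde\sigma_T g_j$ in the basis $\{g_i, g_j\}$ — the same underlying computation, packaged more structurally. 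What each approach buys: the paper's detour through Theorems \ref{thm_main_fem_to_resistor_general} and \ref{network fem equivalence2} reuses machinery it needs anyway (those theorems also characterize \emph{all} piecewise-constant conductivities equivalent to a given network, which is how Theorem \ref{network fem equivalence} is obtained) and keeps the double-resistor-network interpretation of $\beta$ in view; your argument is shorter, self-contained, and makes transparent the slightly stronger fact that $S_\beta$ preserves not just $\Lambda^{\mathrm{FEM}}$ but the entire stiffness matrix — which is exactly the content of $L(S_\beta(\sigma)) = L(\sigma)$ implicit in the paper's chain of equalities.
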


We note that in the deformations considered in Theorem \ref{thm_main_nonuniqueness1},
the graph structure associated with the conductivities $\sigma$ and  $S_{\beta}(\sigma)$
is the same. Finally, it is instructive to state an ''infinitesimal'' version of the above two results.

\begin{thm} \label{thm_main_nonuniqueness2}
Let $\mathscr{T}$ be a triangulation of $\Omega$, let $w: V \to \mR^2$ be a map with $w|_{V_B} = 0$, and define $\Phi_t: V \to \mR^2$ for $t \in \mR$ by 
$$
\Phi_t(x_j) = x_j + t w(x_j), \quad x_j \in V.
$$
If $t$ is sufficiently small, let $\mathscr{T}_t$ be the triangulation of $\Omega$ that is isomorphic to $\mathscr{T}$ via $\Phi_t$. Let also $\beta$ be a map $E_I \to \mR$. Then 
$$
\Lambda_{\sigma}^{FEM(\mathscr{T})} = \Lambda_{(\Phi_t)_* S_{t \beta}(\sigma)}^{FEM(\mathscr{T}_t)}.
$$
\end{thm}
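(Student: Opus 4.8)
The plan is to read the identity as the composition of the two nonuniqueness phenomena established above, applied to a conductivity $\sigma \in PC_+(\mathscr{T})$: first deform $\sigma$ on the fixed triangulation $\mathscr{T}$ by the splitting function $t\beta$ using Theorem~\ref{thm_main_nonuniqueness1}, and then transport the result to the perturbed triangulation $\mathscr{T}_t$ along $\Phi_t$ using the diffeomorphism invariance of Theorem~\ref{main_thm_diffeo_invariance}. Since $t\beta$ is again a real valued function on $E_I$, the first step is immediate: Theorem~\ref{thm_main_nonuniqueness1} produces the conductivity $S_{t\beta}(\sigma) \in PC(\mathscr{T})$, for which $0$ is not a Dirichlet eigenvalue, and gives
\begin{equation*}
\Lambda_{\sigma}^{FEM(\mathscr{T})} = \Lambda_{S_{t\beta}(\sigma)}^{FEM(\mathscr{T})}.
\end{equation*}
It then suffices to establish the transport identity $\Lambda_{S_{t\beta}(\sigma)}^{FEM(\mathscr{T})} = \Lambda_{(\Phi_t)_* S_{t\beta}(\sigma)}^{FEM(\mathscr{T}_t)}$ and to chain the two.

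First I would verify that $\Phi_t$ is an admissible isomorphism of triangulations fixing the boundary, so that it is a legitimate instance of the setting of Theorem~\ref{main_thm_diffeo_invariance}. For $t$ small the affine extension of $\Phi_t$ to each triangle has Jacobian close to the identity, hence is orientation preserving and non-degenerate; it therefore carries each triangle of $\mathscr{T}$ onto a non-degenerate triangle, and these are by definition the triangles of $\mathscr{T}_t$. Because $w|_{V_B} = 0$, the map $\Phi_t$ fixes every boundary vertex, and since it is affine on each boundary edge while $\partial\Omega$ is the union of these edges, we obtain $\Phi_t|_{\partial\Omega} = \mathrm{Id}$. The pushforward $(\Phi_t)_* S_{t\beta}(\sigma)$ is then a well-defined element of $PC(\mathscr{T}_t)$, its defining formula using only the symmetry of the matrix.

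The hard part will be that Theorem~\ref{main_thm_diffeo_invariance} is stated for positive definite conductivities, whereas $S_{t\beta}(\sigma)$ in general lies only in $PC(\mathscr{T})$. I expect to bypass this by observing that diffeomorphism invariance is at bottom the algebraic statement that the FEM stiffness matrices coincide, a fact that nowhere uses positivity. Concretely, the nodal basis functions satisfy $v_j^{\mathscr{T}} = v_j^{\mathscr{T}_t} \circ \Phi_t$, because $\Phi_t$ is affine on each triangle and respects the vertex labelling, so the change of variables $y = \Phi_t(x)$ on each triangle $T$ gives
\begin{equation*}
\int_T S_{t\beta}(\sigma)\, \nabla v_j^{\mathscr{T}} \cdot \nabla v_k^{\mathscr{T}} \,dx = \int_{\Phi_t(T)} (\Phi_t)_* S_{t\beta}(\sigma)\, \nabla v_j^{\mathscr{T}_t} \cdot \nabla v_k^{\mathscr{T}_t} \,dy.
\end{equation*}
Summing over all triangles shows that every entry of the FEM stiffness matrix of $S_{t\beta}(\sigma)$ on $\mathscr{T}$ equals the corresponding entry of the stiffness matrix of $(\Phi_t)_* S_{t\beta}(\sigma)$ on $\mathscr{T}_t$, the boundary/interior partition of the vertices being preserved by $\Phi_t$; in particular $L(S_{t\beta}(\sigma)) = L((\Phi_t)_* S_{t\beta}(\sigma))$, and this identity holds for any symmetric conductivity. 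Consequently $0$ is not a Dirichlet eigenvalue after pushforward either, the two Dirichlet-to-Neumann maps are the same Schur complement of this common matrix, and the transport identity follows. Chaining it with the splitting identity yields
\begin{equation*}
\Lambda_{\sigma}^{FEM(\mathscr{T})} = \Lambda_{S_{t\beta}(\sigma)}^{FEM(\mathscr{T})} = \Lambda_{(\Phi_t)_* S_{t\beta}(\sigma)}^{FEM(\mathscr{T}_t)},
\end{equation*}
which is the assertion.
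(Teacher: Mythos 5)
Your proposal is correct and follows essentially the same route as the paper: first Theorem \ref{thm_main_nonuniqueness1} gives $\Lambda_{\sigma}^{FEM(\mathscr{T})} = \Lambda_{S_{t\beta}(\sigma)}^{FEM(\mathscr{T})}$, and then diffeomorphism invariance transports this identity to $\mathscr{T}_t$ via $\Phi_t$. The one harmless divergence is in handling positivity: the paper simply notes that $(\Phi_t)_* S_{t\beta}(\sigma)$ is positive definite for $t$ small and cites Theorem \ref{main_thm_diffeo_invariance}, whereas you re-derive the change-of-variables stiffness-matrix identity for general symmetric $\sigma \in PC(\mathscr{T})$ with $0$ not a Dirichlet eigenvalue --- which is exactly the content of the lemma in Section \ref{sec_fem} of which Theorem \ref{main_thm_diffeo_invariance} is stated as a special case, so your argument in fact tracks the paper's internal structure.
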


In this result, one may think that $\Phi_t$ moves around the vertices of the triangulation slightly, and $t \mapsto S_{t\beta}(\sigma)$ deforms the conductivity slightly according to the double resistor network picture  (see Fig.\ 2). We emphasize that if $\sigma$ is positive definite and $t>0$ is small enough, then also
the matrix $(\Phi_t)_* S_{t \beta}(\sigma)$ is positive definite. Roughly speaking, this shows that
the degree of the non-uniqueness in inverse problems for discrete FEM models,
even when the graph structure associated with the triangulation is fixed, is strictly larger than
than the one for the conductivity equation considered as a partial differential equation.

\subsection{Discrete non-uniqueness and cloaking}
It is a well known fact that the problem of determining an anisotropic conductivity is unique only up to pull-backs by diffeomorphisms. In the discrete setting, this corresponds to different embeddings of a planar graph into $\mR^2$.

\begin{center}
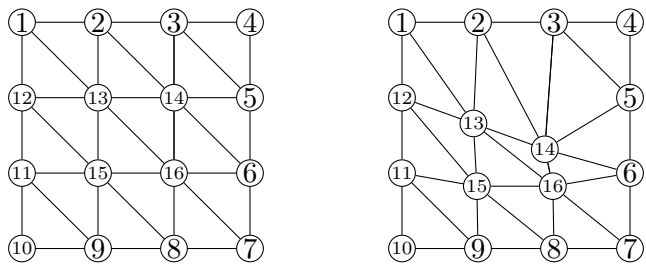
\begin{figure}[ht]
 \begin{tikzpicture}[scale = 0.5]
    \tikzstyle{every node}=[draw,circle,fill=white,minimum size=10pt,
                            inner sep=0pt]
    \draw (-5,-5) node (1) [] {1}
        -- ++(0:2.0cm) node (2) [] {2};
        
     \draw(2) 
        -- ++(0:2.0cm) node (3) [] {{3}};
      \draw(3) 
        -- ++(0:2.0cm) node (4) [] {4};
        
        \draw(4) 
        -- ++(270:2.0cm) node (5) [] {5}; 
         
        \draw(5) 
        -- ++(270:2.0cm) node (6) [] {6}; 

          \draw(6) 
        -- ++(270:2.0cm) node (7) [] {7}; 
        
           \draw(7) 
        -- ++(180:2.0cm) node (8) [] {8}; 
          \draw(8) 
        -- ++(180:2.0cm) node (9) [] {9}; 
  \draw(9) 
        -- ++(180:2.0cm) node (10) [] {{\tiny 10}}; 
        
          \draw(10) 
        -- ++(90:2.0cm) node (11) [] {{\tiny11}}; 

  \draw(11) 
        -- ++(90:2.0cm) node (12) [] {{\tiny 12}}; 
          \draw(12) --++(1);



  \draw(11) 
        -- ++(0:2.0cm) node (15) [] {{\tiny15}}; 
        
   \draw(15) 
        -- ++(0:2.0cm) node (16) [] {{\tiny16}}; 
        
    \draw(16) --++ (6);

  \draw(12) 
        -- ++(0:2.0cm) node (13) [] {{\tiny13}}; 
        
   \draw(13) 
        -- ++(0:2.0cm) node (14) [] {{\tiny 14}}; 
        
    \draw(14) --++ (5);



\draw(2) --++ (13);
\draw(3) --++ (14);
\draw(13) --++ (15);
\draw(15) --++ (9);
\draw(3) --++ (14);
\draw(14) --++ (16);
\draw(16)--++(8);



\draw(1) --++(13);
\draw(13) --++ (16);
\draw(16) --++(7);
\draw(12) --++(15);
\draw(15) --++ (8);
\draw(11) --++(9);
\draw(2) --++(14);
\draw(14) --++(16);
\draw(3) --++ (5);
\draw(14) --++(6);


    \draw (5,-5) node (101) [] {1}
        -- ++(0:2.0cm) node (102) [] {2};
        
     \draw(102) 
        -- ++(0:2.0cm) node (103) [] {3};
      \draw(103) 
        -- ++(0:2.0cm) node (104) [] {4};
        
        \draw(104) 
        -- ++(270:2.0cm) node (105) [] {5}; 
         
        \draw(105) 
        -- ++(270:2.0cm) node (106) [] {6}; 

          \draw(106) 
        -- ++(270:2.0cm) node (107) [] {7}; 
        
           \draw(107) 
        -- ++(180:2.0cm) node (108) [] {8}; 
          \draw(108) 
        -- ++(180:2.0cm) node (109) [] {9}; 
  \draw(109) 
        -- ++(180:2.0cm) node (110) [] {{\tiny10}}; 
        
          \draw(110) 
        -- ++(90:2.0cm) node (111) [] {{\tiny11}}; 

  \draw(111) 
        -- ++(90:2.0cm) node (112) [] {{\tiny12}}; 
          \draw(112) --++(101);



  \draw(111) 
        -- ++(350:2.0cm) node (115) [] {{\tiny15}}; 
        
   \draw(115) 
        -- ++(0:2.0cm) node (116) [] {{\tiny16}}; 
        
    \draw(116) --++ (106);

  \draw(112) 
        -- ++(340:2.0cm) node (113) [] {{\tiny13}}; 
        
   \draw(113) 
        -- ++(340:2.0cm) node (114) [] {{\tiny14}}; 
        
    \draw(114) --++ (105);



\draw(102) --++ (113);
\draw(103) --++ (114);
\draw(113) --++ (115);
\draw(115) --++ (109);
\draw(103) --++ (114);
\draw(114) --++ (116);
\draw(116)--++(108);



\draw(101) --++(113);
\draw(113) --++ (116);
\draw(116) --++(107);
\draw(112) --++(115);
\draw(115) --++ (108);
\draw(111) --++(109);
\draw(102) --++(114);
\draw(114) --++(116);
\draw(103) --++ (105);
\draw(114) --++(106);
\end{tikzpicture}
\caption{Two different embeddings of a planar graph.}
\end{figure}
\end{center}
When considering a FEM model and its corresponding resistor networks there is an additional non-uniqueness. Namely, the piecewise constant conductivity on each triangle can be distributed differently along the adjacent edges. 
\begin{center}
\begin{figure}[ht]
 \begin{tikzpicture}[scale = 0.5]
    \tikzstyle{every node}=[draw,circle,fill=white,minimum size=4pt,
                            inner sep=0pt]
    \draw (-15,0) node (1) [] {1}
        -- ++(0:6.0cm) node (2) [] {2};
  \draw(2) --++ (270: 6cm) node(3) [] {{3}}; 
  \draw(3) --++ (180: 6cm) node(4) [] {4}; 
  \draw(4) --++ (1);
  \draw(4) --++ (45: 4.25cm) node(5) [] {5};
  \draw(1) --++(5) ;
  \draw(2) --++(5);
  \draw(3) --++(5);
 \tikzstyle{every node}=[draw,circle,fill=white,minimum size=4pt,
                            inner sep=0pt]
    \draw (-7.5,0) node (11) [] {1}
        -- ++(0:6.0cm) node (12) [] {2};
  \draw(12) --++ (270: 6cm) node(13) [] {3}; 
  \draw(13) --++ (180: 6cm) node(14) [] {4}; 
  \draw(14) --++ (11);
 \draw(-4.5,-3) node(15) []{5};
  
  \path[every node/.style={font=\sffamily\small}]
    (11) edge [bend right] node[left] {} (15)
   (11) edge [bend left] node[left] {} (15)
   (12) edge [bend right] node[left] {} (15)
   (12) edge [bend left] node[left] {} (15)
    (13) edge [bend right] node[left] {} (15)
   (13) edge [bend left] node[left] {} (15)
   (14) edge [bend right] node[left] {} (15)
   (14) edge [bend left] node[left] {} (15);
          \draw (0,0) node (21) [] {1}
        -- ++(0:6.0cm) node (22) [] {2};
  \draw(22) --++ (270: 6cm) node(23) [] {3}; 
  \draw(23) --++ (180: 6cm) node(24) [] {4}; 
  \draw(24) --++ (21);
 \draw(3,-3) node(25) []{5};
  
  \path[every node/.style={font=\sffamily\small}]
    (21) edge [bend right] node[left] {} (25)
   (21) edge [bend left] node[left] {} (25)
   (22) edge [bend right, red] node[left] {} (25)
   (22) edge [bend left, blue] node[left] {} (25)
    (23) edge [bend right, red] node[left] {} (25)
   (23) edge [bend left, blue] node[left] {} (25)
   (24) edge [bend right] node[left] {} (25)
   (24) edge [bend left] node[left] {} (25);
    
\end{tikzpicture}
\caption{From left to right: The triangulation of a domain; corresponding resistors network with even splitting along the edges; corresponding resistor network with uneven splitting along the edges.}

\end{figure}
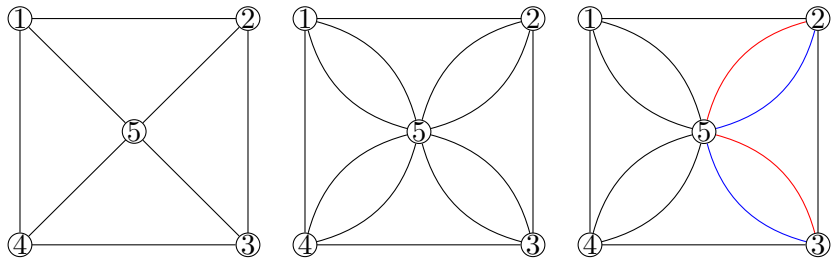
\end{center}
In the last section of the paper we will also discuss discrete cloaking layers where any resistor network $\Omega_*$ can be embedded in a cloaking resistor network without being detected and the corresponding FEM models. 

\subsection*{Structure of paper}
Section \ref{sec_intro} is the introduction and contains the statements of the main results. In Section \ref{sec_fem} we describe the FEM model for the conductivity equation and also prove Theorem \ref{main_thm_diffeo_invariance}. Resistor networks are discussed in Section \ref{sec_resistor}. The correspondence between FEM models and resistor networks is explained in Section \ref{sec_equivalence}, where also Theorems \ref{thm_main_fem_to_resistor}, \ref{network fem equivalence}, \ref{thm_main_nonuniqueness1} and \ref{thm_main_nonuniqueness2} are proved. The final section describes invisibility cloaking constructions 
for FEM models and resistor networks.

\subsection*{Acknowledgements}
M.L., M.S., and L.T. are supported in part by the Academy of Finland (Finnish Centre of Excellence in Inverse Problems Research), and M.S.~is also supported by an ERC Starting Grant. L.T~is partly supported by The Swedish Research Council (Vetenskapsr\aa det). The authors would like to thank the organizers of the Institut Mittag-Leffler program on Inverse Problems where part of this work was done.

\section{FEM model} \label{sec_fem}

\subsection{Dirichlet problem for the FEM model}
A \emph{triangulation} is a finite set $\mathscr{T} = \{ T_1, \ldots, T_R \}$ where each $T_j$ is a closed triangle in $\mR^2$, and two triangles $T_j$ and $T_k$ for $j \neq k$ are either disjoint or their intersection is one vertex or one full edge of each triangle. The \emph{domain} of $\mathscr{T}$ is the set 
$$
\Omega = \text{int}\left( \bigcup_{j=1}^R T_j \right),
$$
and we have $\overline{\Omega} = \bigcup_{j=1}^R T_j$. Two triangulations $\mathscr{T} = \{ T_1, \ldots, T_R \}$ and $\mathscr{T}' = \{ T_1', \ldots, T_{R'}' \}$ with domains $\Omega$ and $\Omega'$ are said to be \emph{isomorphic} if $R = R'$ and there is an orientation preserving homeomorphism $\Phi: \overline{\Omega} \to \overline{\Omega}'$ that is affine on each $T_j$ and maps each $T_j$ onto some $T_{j'}'$ (the map $\Phi$ is called an isomorphism).

We denote the set of all vertices of triangles $T\in \mathscr{T}$ by
$V = \{ x_1, \ldots, x_N \}$. We assume that the points are labeled so that  $V_B = \{ x_1, \ldots, x_B \}=V\cap \p \Omega$, where $B\leq N$, is the set of vertices that lie on $\partial \Omega$.

Given a triangulation $\mathscr{T}$ of a domain $\Omega$, we will consider conductivity matrices from the class 
\begin{align*}
PC(\mathscr{T}) = \{ \sigma \in L^{\infty}(\Omega, \mR^{2 \times 2}) \,;\, & \sigma \text{ is a symmetric constant matrix } \\
 &\text{in the interior of each triangle} \}.
\end{align*}
We also write $PC_+(\mathscr{T})$ for the subspace of matrices that are positive definite on each triangle. 

Below, we say that  $v:\overline \Omega\to \mR$ is a piecewise affine function
(associated with the triangulation $\mathscr{T}$) if $v$
 is a continuous function on $\overline\Omega$ and the restriction
$v_j|_{T_k}$ is an 
affine function for each triangle $T_k\in \mathscr{T}$ . The set of piecewise affine functions is
denoted by $A( \Omega) =A_{\mathscr{T}}( \Omega)$. 
 Moreover, we denote by $v_k$, $k=1,2,\dots,N$, the specific basis of 
the space $A( \Omega)$  for which $v_k$ has value $1$ at the vertex $x_k$
and vanishes at all other vertices $x_j$, $j\not= k.$
Also, write  
$$
A(\partial \Omega) = \{ v|_{\partial \Omega} \,;\, v \in A(\Omega) \} = \{ \sum_{j=1}^B c_j v_j|_{\p {\Omega}}\, ; \ c_j \in \mR \}
$$
for the space of piecewise affine functions (associated with the triangulation $\mathscr{T}$) on the boundary.

Next we will formulate Dirichlet problem for the FEM model.
To motivate it,
consider the Dirichlet problem for the anisotropic conductivity equation 
\begin{equation} \label{conductivity_dirichlet}
\nabla \cdot (\sigma (x)\nabla u(x)) = 0 \text{ in } \Omega, \quad u|_{\partial \Omega} = f,
\end{equation}
where $\Omega \subset \mR^2$ is a bounded domain, and $\sigma \in L^{\infty}(\Omega;\mR^{2 \times 2})$ is a symmetric positive definite matrix. Weak solutions are functions in $H^1(\Omega)$ with $u|_{\partial \Omega} = f$ satisfying 
\begin{equation*}
\int_{\Omega} \sigma(x) \nabla u(x) \cdot \nabla \varphi(x) \,dx = 0 \quad \text{ for all } \varphi \in H^1_0(\Omega).
\end{equation*}

Let $\Omega$ be the interior of a simple polygon, and fix a triangulation of $\closure{\Omega}$ with vertices $\{x_1,\ldots,x_N\}$ of which $\{x_1,\ldots,x_B\}$ lie on $\partial \Omega$. Let $A(\Omega)$ be the set of continuous functions $\closure{\Omega} \to \mR^2$ which are affine on each triangle. 
Recall that $v_k(x)$ is the unique function in $A(\Omega)$ which is $1$ at $x_k$ and $0$ at all the other vertices and $A(\Omega) = \{ \sum_{k=1}^N c_k v_k(x) \,;\, c_k \in \mR \}$. Note that $A(\Omega) \subset H^1(\Omega)$ and the gradient of a function in $A(\Omega)$ is constant in the interior of each triangle. If $f\in A(\p \Omega)$, the FEM approximation to \eqref{conductivity_dirichlet} asks to find a function $u \in A(\Omega)$ such that 
\begin{equation} \label{fem_dp}
\left\{ \begin{array}{r@{\hspace{3pt}}ll}
\int_{\Omega} \sigma(x) \nabla u(x)  \cdot \nabla v_j(x)  \,dx &= 0,  & \quad j=B+1,\ldots,N, \\[5pt]
u(x_j) &= f(x_j), & \quad j=1,\ldots,B.
\end{array} \right.
\end{equation}
We refer to \cite{Ciarlet} for more information on FEM approximations.

\subsection{FEM model as a matrix equation}
Let $f\in A(\p \Omega)$   be a given piecewise affine 
function on the boundary and let $u\in A(\Omega)$ be the solution of equation (\ref{fem_dp}).
Writing $f(x_j) = f_j$ and $u(x) = \sum_{k=1}^N u_k v_k(x)$, (\ref{fem_dp}) is equivalent with the matrix equation 
\begin{equation} \label{fem_equation}
L u = \left( \begin{array}{cc} f \\ 0 \end{array} \right)
\end{equation}
where $u = (u_1 \cdots u_N)^t$ and $f = (f_1 \cdots f_B)^t$, and $L = (l_{jk})_{j,k=1}^N$ is the matrix given by 
\begin{equation} \label{L-matrix}
\begin{array}{l@{\hspace{20pt}}l}
l_{jk} = \delta_{jk},  & 1 \leq j \leq B, \\[5pt]
l_{jk} = \int_{\Omega} \sigma \nabla v_j \cdot \nabla v_k \,dx, & B+1 \leq j \leq N.
\end{array}
\end{equation}
We write $L = L(\sigma) = (l_{jk}(\sigma))_{j,k=1}^N$ for the matrix above. Next we record some well-known properties of the matrix $L$.

\begin{lemma}
If $\sigma$ is a symmetric matrix function in $L^{\infty}(\Omega ; \mR^{2 \times 2})$, then $L(\sigma)$ has the form 
\begin{equation*}
L = \left( \begin{array}{cc} I_{B \times B} & 0 \\ L' & L'' \end{array} \right)
\end{equation*}
where $L''$ is symmetric. The rows of $\left( \begin{array} {cc} L' & L''  \end{array} \right)$ add up to zero:
\begin{equation*}
\sum_{k=1}^N l_{jk} = 0, \quad j \geq B+1.
\end{equation*}
If $\sigma$ is positive definite (in the sense that $\sigma(x) \xi \cdot \xi \geq c\abs{\xi}^2$ for a.e.~$x \in \Omega$ and for all $\xi \in \mR^2$ where $c>0$), then $L$ is positive definite (thus invertible) and $l_{jj} > 0$ for $j \geq B+1$.
\end{lemma}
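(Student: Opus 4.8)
The plan is to read off all three assertions from the full (symmetric) stiffness matrix $K = (K_{jk})_{j,k=1}^N$ with entries $K_{jk} = \int_\Omega \sigma \nabla v_j \cdot \nabla v_k \, dx$, together with properties of the bilinear form $a(u,v) = \int_\Omega \sigma \nabla u \cdot \nabla v \, dx$ on $A(\Omega)$. Directly from \eqref{L-matrix}, for $1 \le j \le B$ the $j$-th row of $L$ is the $j$-th row of the identity, giving the top block $(I_{B\times B} \ 0)$; and for $j \ge B+1$ the $j$-th row of $L$ agrees with the $j$-th row of $K$, so $L'' = (K_{jk})_{j,k=B+1}^N$. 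Since $a$ is symmetric, $K$ is a symmetric matrix, hence so is its principal submatrix $L''$.

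For the zero row-sum I would use the partition of unity $\sum_{k=1}^N v_k \equiv 1$ on $\overline\Omega$, which is immediate from the nodal basis definition (the left side is piecewise affine and equals $1$ at every vertex). Consequently $\sum_{k=1}^N \nabla v_k = 0$ in the interior of every triangle, and for $j \ge B+1$ we get $\sum_{k=1}^N l_{jk} = \int_\Omega \sigma \nabla v_j \cdot \nabla(\sum_k v_k) \, dx = 0$.

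The positive-definiteness claim is the part requiring a genuine argument. The key observation is that for any $\xi = (\xi_1, \ldots, \xi_N)^t$ with $w = \sum_k \xi_k v_k \in A(\Omega)$ one has $\xi^t K \xi = a(w,w) = \int_\Omega \sigma \nabla w \cdot \nabla w \, dx \ge c \int_\Omega \abs{\nabla w}^2 \, dx \ge 0$, using uniform positivity of $\sigma$. Restricting to the interior block, i.e.\ taking $\xi$ supported on indices $\ge B+1$, the associated $w = \sum_{k=B+1}^N \eta_k v_k$ vanishes at every boundary vertex. The essential step is to show $\eta^t L'' \eta > 0$ for $\eta \ne 0$: here $a(w,w) = 0$ would force $\nabla w = 0$ a.e., and since $\Omega$ is connected (being the interior of a simple polygon) and $w$ is continuous, $w$ must then be constant; as $w$ vanishes at the boundary vertices it is identically $0$, whence $\eta = 0$. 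Thus $L''$ is symmetric positive definite, hence invertible, and since $L$ is block lower triangular with $\det L = \det(I_{B \times B}) \det L'' = \det L'' \ne 0$, the matrix $L$ is invertible. Finally $l_{jj} = a(v_j, v_j) \ge c\int_\Omega \abs{\nabla v_j}^2 \, dx > 0$ for every $j$, because $v_j$ is nonconstant and so has nonvanishing gradient on the triangles meeting $x_j$.

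The main obstacle is precisely the strict positivity of the interior form $\eta^t L'' \eta$: everything reduces to the implication that $\nabla w = 0$ a.e.\ together with $w$ vanishing at the boundary vertices forces $w \equiv 0$, which is where connectedness of $\Omega$ and the continuity of piecewise affine functions are indispensable (without them the form could be merely positive semidefinite). I would also remark that ``$L$ positive definite'' is to be read as positive definiteness of the interior block $L''$, since $L$ itself is not symmetric; the property actually needed later — that $0$ is not a Dirichlet eigenvalue, i.e.\ invertibility of $L$ — follows from that of $L''$ via the block-triangular determinant formula.
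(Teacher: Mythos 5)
Your proposal is correct and follows essentially the same route as the paper: the identity rows give the block form, the partition of unity $\sum_{k=1}^N v_k \equiv 1$ gives the zero row sums, and the quadratic form $z^t L'' z = \int_\Omega \sigma \nabla w \cdot \nabla w \,dx$ with $w = \sum_{k \geq B+1} z_k v_k$ gives positive definiteness of $L''$ and $l_{jj} > 0$. You merely spell out two points the paper leaves implicit — that $\nabla w = 0$ a.e.\ together with vanishing at boundary vertices and connectedness of $\Omega$ forces $w \equiv 0$ (hence strict positivity), and that invertibility of the non-symmetric $L$ should be read off from $\det L = \det L''$ rather than from positive definiteness of $L$ itself — both of which are accurate refinements rather than a different argument.
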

\begin{proof}
Clearly $L$ has the above form, and if $j \geq B+1$ then 
\begin{equation*}
\sum_{k=1}^N l_{jk} = \int_{\Omega} \sigma \nabla v_j \cdot \nabla (\sum_{k=1}^N v_k) \,dx = 0,
\end{equation*}
since $\sum_{k=1}^N v_k \equiv 1$ in $\Omega$. Assume now that $\sigma$ is positive definite. Then $L''$ is positive definite since for $z =( z_{B+1} \cdots z_N )^t \in \mR^{N-B} \setminus \{0\}$, 
$$
L'' z \cdot z = \sum_{j,k=B+1}^N l_{jk} z_j z_k = \int_{\Omega} \sigma \nabla w \cdot \nabla w \geq c \int_{\Omega} \abs{\nabla w}^2 > 0
$$
where $w = \sum_{k \geq B+1} z_k v_k$. Thus also $L$ is positive definite and invertible, and for $j \geq B+1$ we have 
$$
l_{jj} = \int_{\Omega} \sigma \nabla v_j \cdot \nabla v_j \,dx \geq c \int_{\Omega} \abs{\nabla v_j}^2 \,dx > 0.
$$
\end{proof}

It will be useful to consider the FEM approximation for conductivities which may not be positive definite. If $\sigma \in L^{\infty}(\Omega ; \mR^{2 \times 2})$ is a symmetric matrix, we say that $0$ is not an eigenvalue of the FEM problem if the matrix $L = L(\sigma)$ is invertible. In this case, for any $f = (f_1 \cdots f_B)^t$ the FEM problem \eqref{fem_equation} has a unique solution $u = L^{-1} f$, and one has a well defined Dirichlet-to-Neumann map $\Lambda_{\sigma}^{\mathrm{FEM}}$ defined below.

For the relation to resistor networks below, one might ask for the condition $l_{jk} \leq 0$ for $j \neq k$ (this would ensure that no resistor has negative conductivity). This is not true in general, but in the following case the non-negativity of elements of the matrix $L$ can be guaranteed.

\begin{lemma}
If $\sigma(x)$ is isotropic, i.e.~a scalar function multiplied with the identity matrix, and if the triangulation of $\closure{\Omega}$ is such that no triangle has an angle $> \pi/2$, then $l_{jk} \leq 0$ for $j \neq k$.
\end{lemma}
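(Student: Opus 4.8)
The plan is to reduce the claim to a purely local, single-triangle computation and then invoke a sign-tracked version of the classical cotangent identity for the affine hat functions. First I would dispose of the easy entries: when $1 \le j \le B$ the row is $l_{jk} = \delta_{jk}$, so every off-diagonal entry there is $0$ and the inequality holds trivially. For $j \ge B+1$ we have $l_{jk} = \int_{\Omega} \sigma \nabla v_j \cdot \nabla v_k \,dx$, and the integrand vanishes outside the triangles containing \emph{both} $x_j$ and $x_k$ as vertices, since $\nabla v_i$ is supported on the union of triangles having $x_i$ as a vertex. If $x_j$ and $x_k$ are not joined by an edge then no triangle contains both, and $l_{jk}=0$; otherwise, writing $\sigma = a(x) I$ with $a \ge 0$, only the one or two triangles sharing the edge $\{x_j,x_k\}$ contribute, and on each such triangle $T$ the scalar $\nabla v_j \cdot \nabla v_k$ is constant, so that $\int_T \sigma \nabla v_j \cdot \nabla v_k \,dx = (\nabla v_j \cdot \nabla v_k)|_T \int_T a \,dx$.

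The heart of the argument is to show that $(\nabla v_j \cdot \nabla v_k)|_T \le 0$ on each triangle $T$ with edge $\{x_j,x_k\}$ and opposite vertex $x_\ell$. Here I would use the geometry of the affine basis functions: $\nabla v_j$ is constant on $T$, perpendicular to the edge $\{x_k,x_\ell\}$ opposite $x_j$, and points toward $x_j$ (the direction in which $v_j$ increases), while $\nabla v_k$ is perpendicular to $\{x_j,x_\ell\}$ and points toward $x_k$. A short computation in coordinates (placing $x_\ell$ at the origin) shows that the angle between these two gradient vectors is $\pi - \theta_\ell$, where $\theta_\ell$ is the interior angle of $T$ at $x_\ell$; equivalently, this is the classical cotangent identity $\int_T \nabla v_j \cdot \nabla v_k \,dx = -\tfrac12 \cot\theta_\ell$. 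Either way one obtains
\[
(\nabla v_j \cdot \nabla v_k)|_T = \abs{\nabla v_j}\,\abs{\nabla v_k}\cos(\pi-\theta_\ell) = -\abs{\nabla v_j}\,\abs{\nabla v_k}\cos\theta_\ell .
\]

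To finish, I would combine the two steps. The hypothesis that no triangle has an angle $>\pi/2$ forces $\theta_\ell \in (0,\pi/2]$ for every triangle $T$ and opposite vertex $x_\ell$, whence $\cos\theta_\ell \ge 0$ and $(\nabla v_j \cdot \nabla v_k)|_T \le 0$. Since $a \ge 0$ gives $\int_T a \,dx \ge 0$, each triangle contributes a nonpositive term to $l_{jk}$, and summing over the (at most two) triangles sharing the edge $\{x_j,x_k\}$ yields $l_{jk} \le 0$. The only delicate point, which I regard as the main obstacle, is the sign bookkeeping in the geometric step: one must correctly orient each gradient (toward its own vertex) and confirm that the relevant angle is $\pi - \theta_\ell$ rather than $\theta_\ell$, since this is exactly what makes the dot product negative. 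Note that only the \emph{sign} of $\nabla v_j\cdot\nabla v_k$ is needed, not the precise cotangent value, and that isotropy is essential here: for a genuinely anisotropic $\sigma$ the form $\sigma \nabla v_j \cdot \nabla v_k$ need not have a fixed sign, and the conclusion can fail.
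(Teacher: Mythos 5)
Your proof is correct and follows essentially the same route as the paper: reduce to the one or two triangles sharing the edge $\{x_j,x_k\}$ and observe that the no-obtuse-angle hypothesis forces $\nabla v_j \cdot \nabla v_k \leq 0$ there, which isotropy upgrades to $\sigma \nabla v_j \cdot \nabla v_k \leq 0$. The paper compresses your angle computation into the phrase ``elementary geometric considerations''; your explicit cotangent identity $(\nabla v_j \cdot \nabla v_k)|_T = -\abs{\nabla v_j}\,\abs{\nabla v_k}\cos\theta_\ell$ is exactly what that phrase refers to.
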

\begin{proof}
It is enough to consider $l_{jk}$ for $j \geq B+1$ and $k \neq j$. One has $l_{jk} = 0$ unless $x_j$ and $x_k$ are neighboring vertices, and in that case there are exactly two triangles $T_1$ and $T_2$ which have both $x_j$ and $x_k$ as vertices. Since no angle is $> \pi/2$, elementary geometric considerations imply that in these two triangles $\nabla v_j \cdot \nabla v_k \leq 0$. Thus also $\sigma \nabla v_j \cdot \nabla v_k \leq 0$ in these triangles.
\end{proof}

\subsection{Dirichlet-to-Neumann map for FEM model}
We will next define a Dirichlet-to-Neumann map $\Lambda_{\sigma}^{\mathrm{FEM}}$ related to the FEM model. Assume that $0$ is not a Dirichlet eigenvalue for the FEM model, so (\ref{fem_dp}) has a unique
solution with all $f\in A(\p \Omega)$. We define the Dirichlet-to-Neumann map  $\Lambda_{\sigma}^{\mathrm{FEM}}:
A(\partial \Omega)\to A(\partial \Omega)$ to be the unique map  given by the pairing
\begin{equation*}
\langle \Lambda_{\sigma}^{\mathrm{FEM}} f, g \rangle_{A(\partial \Omega)} = \int_{\Omega} \sigma \nabla u \cdot \nabla w \,dx , \quad \hbox{for all } f, g \in A(\partial \Omega),
\end{equation*}
where $u \in A(\Omega)$ is the solution of \eqref{fem_dp}, and $w$ is any function in $A(\Omega)$ with $w|_{\partial \Omega} = g$. Here we have written 
\begin{equation*}
\langle f, g \rangle_{A(\partial \Omega)} = \sum_{j=1}^B f(x_j) g(x_j).
\end{equation*}
Writing $u(x) = \sum_{j=1}^N u_j v_j(x)$, $u_j = u(x_j) \in \mR$, we obtain an explicit formula for the Dirichlet-to-Neumann map:
\begin{equation}\label{FEM DN map}
\Lambda_{\sigma}^{\mathrm{FEM}} f(x_j) = \sum_{k=1}^N \Big( \int_{\Omega} \sigma(x) \nabla v_j(x) \cdot \nabla v_k(x) \,dx \Big) u_k.
\end{equation}

\subsection{Diffeomorphism invariance}

If $\sigma \in PC(\mathscr{T})$ and if $\Phi$ is an isomorphism of two triangulations $\mathscr{T}$ and $\mathscr{T}'$, we define the pushforward of $\sigma$ via $\Phi$ by 
$$
\Phi_* \sigma(y) = \frac{(D\Phi(x)) \sigma (x)(D\Phi(x))^t}{\abs{\det D\Phi(x)}}\Big|_{x=\Phi^{-1}(y)},
$$
where $y$ is in the interior of some triangle of $\mathscr{T}'$. Since $\Phi$ is affine on each triangle, it follows that $\Phi_* \sigma \in PC(\mathscr{T}')$. Moreover, if $\sigma \in PC_+(\mathscr{T})$ then $\Phi_* \sigma \in PC_+(\mathscr{T}')$.


\begin{lemma}
Let $\mathscr{T}$ and $\mathscr{T}'$ be two triangulations of $\Omega \subset \mR^2$ that are isomorphic via some $\Phi: \overline{\Omega} \to \overline{\Omega}$ with $\Phi|_{\partial \Omega} = \mathrm{Id}$, and label the vertices of the triangulations so that $x_j' = \Phi(x_j)$. If $\sigma \in PC(\mathscr{T})$ and if $0$ is not a Dirichlet eigenvalue for the FEM problem with conductivity $\sigma$, then $0$ is not a Dirichlet eigenvalue for the FEM problem with conductivity $\Phi_* \sigma$ and we have 
$$
L(\sigma) = L(\Phi_* \sigma), \qquad \Lambda_{\sigma}^{FEM(\mathscr{T})} = \Lambda_{\Phi_* \sigma}^{FEM(\mathscr{T}')}.
$$
\end{lemma}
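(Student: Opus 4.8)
The plan is to reduce both claimed identities to a single change-of-variables computation for the stiffness bilinear form, carried out triangle by triangle. Let $v_1,\dots,v_N$ be the nodal basis for $A_{\mathscr{T}}(\Omega)$ and $v_1',\dots,v_N'$ the nodal basis for $A_{\mathscr{T}'}(\Omega)$, indexed so that $v_k'$ equals $1$ at $x_k' = \Phi(x_k)$ and vanishes at the other vertices. The first thing I would establish is the relation $v_k' = v_k \circ \Phi^{-1}$. Indeed, since $\Phi$ is an orientation-preserving homeomorphism that is affine on each triangle and carries triangles of $\mathscr{T}$ onto triangles of $\mathscr{T}'$, the function $v_k \circ \Phi^{-1}$ is continuous on $\overline{\Omega}$, affine on each triangle of $\mathscr{T}'$, equal to $1$ at $x_k'$ and to $0$ at every other $x_j'$; by uniqueness of the nodal basis it must coincide with $v_k'$.

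Next I would prove the pushforward energy identity
$$\int_{\Omega} \sigma \, \nabla v_j \cdot \nabla v_k \, dx = \int_{\Omega} (\Phi_* \sigma) \, \nabla v_j' \cdot \nabla v_k' \, dy \qquad \text{for all } j,k.$$
Because $\sigma$, each $\nabla v_k$, and (as $\Phi$ is affine on each triangle) the Jacobian $D\Phi$ are all constant on the interior of each triangle $T \in \mathscr{T}$, the computation is elementary and can be done one triangle at a time, the shared edges forming a null set. On a fixed triangle $T$ with image $T' = \Phi(T)$, the relation $v_k = v_k' \circ \Phi$ and the chain rule give the constant-vector identity $\nabla v_k = (D\Phi)^t \nabla v_k'$, whence $\sigma \nabla v_j \cdot \nabla v_k = \big((D\Phi)\sigma (D\Phi)^t\big)\nabla v_j' \cdot \nabla v_k'$. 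Since all integrands are constant, $\int_T$ reduces to a value times $\abs{T}$, and using $\abs{T'} = \abs{\det D\Phi}\,\abs{T}$ together with the factor $\abs{\det D\Phi}$ in the definition of $\Phi_*\sigma$ shows $\int_T \sigma \nabla v_j\cdot\nabla v_k\,dx = \int_{T'} (\Phi_*\sigma)\nabla v_j'\cdot\nabla v_k'\,dy$. Summing over all triangles yields the identity, i.e.\ the full stiffness matrices agree: $K_{jk}(\sigma) = K_{jk}(\Phi_*\sigma)$, where $K_{jk} = \int_\Omega \sigma \nabla v_j \cdot \nabla v_k \, dx$.

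With the energy identity in hand the rest is bookkeeping. The matrix $L(\sigma)$ agrees with $K(\sigma)$ in rows $B+1,\dots,N$ and has the identity in the first $B$ rows independently of $\sigma$; hence $K(\sigma) = K(\Phi_*\sigma)$ forces $L(\sigma) = L(\Phi_*\sigma)$. In particular $L(\Phi_*\sigma)$ is invertible, so $0$ is not a Dirichlet eigenvalue for $\Phi_*\sigma$, and the FEM solution $u = L^{-1}(f,0)^t$ is literally the same vector for both models; here $\Phi|_{\partial\Omega} = \mathrm{Id}$ gives $x_j' = x_j$ for $1 \leq j \leq B$, so the boundary data are imposed at the same nodes. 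Finally, the explicit formula \eqref{FEM DN map} expresses $\Lambda_\sigma^{FEM(\mathscr{T})} f(x_j)$ as $\sum_k K_{jk}(\sigma)\, u_k$; since both $K$ and $u$ coincide for $\sigma$ and $\Phi_*\sigma$, we conclude $\Lambda_{\sigma}^{FEM(\mathscr{T})} = \Lambda_{\Phi_* \sigma}^{FEM(\mathscr{T}')}$.

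The one point requiring genuine care, which I would flag as the main obstacle, is the non-smoothness of $\Phi$: it is merely piecewise affine, so $D\Phi$ jumps across interior edges and a global change of variables is not immediate. This is handled precisely by working triangle by triangle, using that all integrands are piecewise constant and that the edges have measure zero; orientation preservation ensures $\det D\Phi > 0$ on each triangle, so $\abs{\det D\Phi}$ matches the local Jacobian factor cleanly and no sign issues arise.
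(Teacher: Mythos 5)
Your proposal is correct and follows essentially the same route as the paper: identifying $v_k' = v_k \circ \Phi^{-1}$, computing $\nabla v_k' = (D\Phi)^{-t}\nabla v_k$, and showing $l_{jk}(\Phi_*\sigma) = l_{jk}(\sigma)$ via the change of variables $y = \Phi(x)$, from which invertibility and equality of the Dirichlet-to-Neumann maps follow. Your triangle-by-triangle verification with the area factor $\abs{T'} = \abs{\det D\Phi}\,\abs{T}$ is just a careful, explicit rendering of the piecewise-affine change of variables the paper performs in one step, and the point you flag about $D\Phi$ jumping across edges is handled the same way there.
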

\begin{proof}
By definition $l_{jk}(\sigma) = l_{jk}(\Phi_* \sigma) = \delta_{jk}$ for $j \leq B$. If $\{ v_1, \ldots, v_N \}$ and $\{ v_1', \ldots, v_N' \}$ are the basis functions of $A_{\mathscr{T}}(\Omega)$ and $A_{\mathscr{T}'}(\Omega)$, respectively, then 
$$
v_k'(y) = v_k(\Phi^{-1}(y)), \quad y \in \overline{\Omega},
$$
and therefore (writing $\nabla$ as a column vector) 
$$
\nabla v_k'(y) = D\Phi^{-1}(y)^t \nabla v_k(\Phi^{-1}(y)) = D\Phi(\Phi^{-1}(y))^{-t} \nabla v_k(\Phi^{-1}(y)).
$$
If $1 \leq j,k \leq N$, the change of variables $y = \Phi(x)$ implies 
$$
l_{jk}(\Phi_* \sigma) = \int_{\Omega} \Phi_* \sigma(y) \nabla v_j'(y) \cdot \nabla v_k'(y) \,dy = \int_{\Omega} \sigma \nabla v_j \cdot \nabla v_k \,dx = l_{jk}(\sigma).
$$
The result follows.
\end{proof}

\begin{proof}[Proof of Theorem \ref{main_thm_diffeo_invariance}]
This is just a special case of the previous lemma where $\sigma \in PC_+(\mathscr{T})$.
\end{proof}

\section{Resistor networks} \label{sec_resistor}

To properly state the equivalence of the FEM model and resistor networks, we need some concepts from graph theory (see \cite{Diestel}). By a graph we mean a set $(V,E)$ where $V$ (vertices) is a finite set, and $E$ (edges) is a set of unordered pairs of elements in $V$. A path from $x \in V$ to $y \in V$ is a sequence of vertices $(x_0,x_1,\ldots,x_k)$ where $x_0 = x$, $x_k = y$, and $\{x_j,x_{j+1}\} \in E$ for $0 \leq j \leq k-1$. We only consider graphs which are connected (any two vertices can be joined by a path) and simple (no self-loops, i.e.~$E$ has no elements of the form $\{x,x\}$).

\begin{definition}
A \emph{resistor network} is a graph $(V,E)$ together with a distinguished set $V_B \subset V$ and a function $\gamma: E \to \mR$.
\end{definition}

We will label elements of $V$ so that $V_B = \{x_1,\ldots,x_B\}$ (the set of boundary vertices) and $V \smallsetminus V_B = \{x_{B+1},\ldots,x_N\}$ (the set of interior vertices). If $e = \{x_j,x_k\} \in E$ we write $\gamma_{jk} := \gamma(e)$ (the conductivity of a resistor located on the edge $e$), with the convention that $\gamma_{jk} = 0$ if $\{x_j,x_k\} \notin E$. Given a function $f: V_B \to \mR$, we would like to find a function $u: V \to \mR$ satisfying 
\begin{equation} \label{resistor_dp}
\left\{ \begin{array}{r@{\hspace{3pt}}ll}
\sum_{x_k \in \mathcal{N}(x_j)} \gamma_{jk}[u(x_j) - u(x_k)] &= 0,  & \quad j=B+1,\ldots,N, \\[5pt]
u(x_j) &= f(x_j), & \quad j=1,\ldots,B.
\end{array} \right.
\end{equation}

Here $\mathcal{N}(x) = \{ y \in V \,;\, \{ x, y \} \in E \}$ is the set of neighboring vertices of $x$. If the graph is such that $(V,E)$ can be drawn as a straight-line graph in the plane, by Kirchhoff's law the function $u$ corresponds to a voltage potential in the resistor network induced by the voltage $f$ applied at the boundary vertices.

Writing $f(x_j) = f_j$ and $u(x_j) = u_j$, the problem \eqref{resistor_dp} corresponds to the matrix equation 
\begin{equation} \label{rn_equation}
\tilde{L} u = \left( \begin{array}{cc} f \\ 0 \end{array} \right)
\end{equation}
where $u = (u_1 \cdots u_N)^t$ and $f = (f_1 \cdots f_B)^t$, and $\tilde{L} = (\tilde{l}_{jk})_{j,k=1}^N$ with 
\begin{equation*}
\begin{array}{l@{\hspace{20pt}}l}
\tilde{l}_{jk} = \delta_{jk},  & j \leq B, \\[5pt]
\tilde{l}_{jj} = \sum_{x_k \in \mathcal{N}(x_j)} \gamma_{jk},  & j \geq B+1, \\[5pt]
\tilde{l}_{jk} = -\gamma_{jk}, & j \geq B+1, \ k \neq j.
\end{array}
\end{equation*}
It is clear that the matrix $\tilde{L}$ has the form 
\begin{equation*}
\tilde{L} = \left( \begin{array}{cc} I_{B \times B} & 0 \\ \tilde{L}' & \tilde{L}'' \end{array} \right)
\end{equation*}
where $\tilde{L}''$ is symmetric, and for $j \geq B+1$ one has 
\begin{equation*}
\sum_{k=1}^N \tilde{l}_{jk} = 0.
\end{equation*}
Further, if $\gamma(e) > 0$ for all $e \in E$ then $\tilde{l}_{jj} > 0$ for all $j$.

We next give a condition for the problem \eqref{resistor_dp}, equivalently \eqref{rn_equation}, to have a unique solution for any boundary value $f$. This holds if all resistors have positive conductivity and if the graph $(V',E')$ where $V' = \{x_{B+1},\ldots,x_N\}$ and $E' = \{\{x_j,x_k\} \in V' \times V' \,;\, \tilde{l}_{jk} \neq 0 \text{ and } j \neq k\}$ is connected (here $(V',E')$ is the graph obtained from $(V,E)$ by removing all boundary vertices and all edges whose end point is a boundary vertex).

\begin{lemma}
Assume that $\gamma(e) > 0$ for all edges $e \in E$ and that $(V',E')$ is connected. Then $\tilde{L}$ is invertible, and further $\tilde{L}''$ and also $\tilde{L}$ are positive definite.
\end{lemma}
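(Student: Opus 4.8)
The plan is to concentrate all the work on the symmetric block $\tilde{L}''$, prove that it is positive definite, and then read off the invertibility and positivity of the full matrix $\tilde{L}$ from its block lower-triangular structure, exactly as in the proof of the earlier lemma for $L(\sigma)$. The positive definiteness of $\tilde{L}''$ is the only real content; everything about $\tilde{L}$ itself will be formal.

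First I would establish that $\tilde{L}''$ is positive definite. Since $\tilde{L}''$ is symmetric it suffices to show $\tilde{L}'' z \cdot z > 0$ for every $z = (z_{B+1}, \ldots, z_N)^t \neq 0$. Let $\hat{z} \in \mR^N$ be the extension of $z$ by zero on the boundary vertices, i.e.\ $\hat{z}_j = 0$ for $j \leq B$ and $\hat{z}_j = z_j$ for $j \geq B+1$. Inserting the entries $\tilde{l}_{jj} = \sum_{x_k \in \mathcal{N}(x_j)} \gamma_{jk}$ and $\tilde{l}_{jk} = -\gamma_{jk}$ into the quadratic form and regrouping the contributions by edges, I would obtain the Dirichlet energy identity
\[
\tilde{L}'' z \cdot z = \sum_{\{x_j, x_k\} \in E} \gamma_{jk} (\hat{z}_j - \hat{z}_k)^2 = \sum_{\{x_j,x_k\} \in E'} \gamma_{jk}(z_j - z_k)^2 + \sum_{\{x_j,x_k\} \in E_\partial} \gamma_{jk} z_j^2,
\]
where $E'$ is the set of interior--interior edges and $E_\partial$ is the set of edges joining an interior vertex $x_j$ to a boundary vertex $x_k$ (boundary--boundary edges contribute nothing, since $\hat{z}$ vanishes there). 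As every $\gamma_{jk} > 0$, this expression is $\geq 0$, so $\tilde{L}''$ is at least positive semidefinite.

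The decisive step is ruling out a nontrivial kernel. Suppose $\tilde{L}'' z \cdot z = 0$. Then each summand vanishes, so $z_j = z_k$ along every interior--interior edge and $z_j = 0$ at every interior vertex $x_j$ incident to a boundary edge. The hypothesis that $(V',E')$ is connected forces $z$ to be constant over all interior vertices; and since $(V,E)$ is connected with $V_B \neq \emptyset$, there is at least one edge in $E_\partial$, which pins that constant to $0$. Hence $z = 0$, so $\tilde{L}''$ is positive definite and in particular $\det \tilde{L}'' > 0$.

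Finally, $\tilde{L}$ has the block form $\left( \begin{smallmatrix} I_B & 0 \\ \tilde{L}' & \tilde{L}'' \end{smallmatrix} \right)$, so $\det(\tilde{L} - \lambda I) = (1 - \lambda)^B \det(\tilde{L}'' - \lambda I)$ and $\det \tilde{L} = \det \tilde{L}'' > 0$. Thus $\tilde{L}$ is invertible, and its spectrum consists of $1$ (with multiplicity $B$) together with the eigenvalues of the symmetric positive definite matrix $\tilde{L}''$, all of which are positive; this is the sense in which $\tilde{L}$ is positive definite. I expect the only genuine obstacle to be the kernel analysis, where one must use the connectivity of $(V',E')$ to propagate $z_j = z_k$ across the interior and the connectivity of $(V,E)$ to guarantee an interior--boundary edge that forces the constant to vanish. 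It is worth emphasizing that $\tilde{L}$ is not symmetric, and indeed its quadratic form $\tilde{L} x \cdot x$ can be negative (for instance for a single interior vertex joined to the boundary by a resistor of large conductivity), so ``positive definite'' for $\tilde{L}$ must be read spectrally rather than through the quadratic form.
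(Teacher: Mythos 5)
Your proof is correct, but it takes a genuinely different route from the paper. The paper argues via matrix theory: it observes that $\tilde{L}''$ is symmetric and diagonally dominant with nonnegative diagonal, hence positive semidefinite by Gershgorin's circle theorem, and then invokes results on irreducibly diagonally dominant matrices (citing Horn--Johnson) to upgrade to invertibility, using connectedness of $(V',E')$ together with the observation that strict diagonal dominance must hold in at least one row --- since otherwise no interior vertex would neighbor a boundary vertex. You instead prove the Dirichlet energy identity
$$
\tilde{L}'' z \cdot z \;=\; \sum_{\{x_j,x_k\} \in E'} \gamma_{jk}(z_j - z_k)^2 \;+\; \sum_{\{x_j,x_k\} \in E_\partial} \gamma_{jk} z_j^2,
$$
which is correct (the diagonal entries $\tilde{l}_{jj}$ include the boundary-neighbor conductivities, and these are exactly what produce the second sum), and then run the kernel analysis directly: constancy on the connected graph $(V',E')$, pinned to zero by an interior--boundary edge whose existence follows from connectedness of $(V,E)$. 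Your route is more elementary and self-contained --- no citation to Gershgorin-disk theory is needed --- and it is the exact discrete analogue of the continuum energy argument the paper itself uses for $L(\sigma)$ in the FEM lemma, so it arguably fits the paper's narrative better; the paper's route buys brevity and a ready-made criterion that generalizes to non-symmetric dominance situations. Note also that your two connectivity uses match the paper's one-for-one: constancy across $E'$ corresponds to irreducibility of $\tilde{L}''$, and the pinning edge corresponds to the row with strict dominance. Finally, your closing caveat is a genuine improvement in precision: $\tilde{L}$ is not symmetric and its quadratic form can indeed be negative (your single-interior-vertex example with large conductivity is valid, e.g.\ $\tilde{L} = \left( \begin{smallmatrix} 1 & 0 \\ -\gamma & \gamma \end{smallmatrix} \right)$ gives $\tilde{L}x \cdot x = 1 - \gamma/4 < 0$ at $x = (1, 1/2)^t$ for $\gamma > 4$), so reading ``positive definite'' spectrally, via $\det(\tilde{L} - \lambda I) = (1-\lambda)^B \det(\tilde{L}'' - \lambda I)$, is the right way to make sense of a claim the paper states but never actually justifies for $\tilde{L}$ itself.
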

\begin{proof}
Since the conductivities are positive, $\tilde{L}''$ is diagonally dominant:
\begin{equation} \label{diagonal_dominance}
\tilde{l}_{jj} \geq \sum_{\stackrel{k=B+1}{k \neq j}}^N \abs{\tilde{l}_{jk}}, \qquad j \geq B+1.
\end{equation}
This implies that $\tilde{L}''$ is positive semidefinite, as a symmetric diagonally dominant matrix with nonnegative diagonal entries (this is a consequence of Gershgorin's circle theorem). For invertibility of $\tilde{L}''$ we note that if there is strict inequality in \eqref{diagonal_dominance} for some $j \geq B+1$ and if the graph $(V',E')$ is connected, then results based on Gershgorin disks (see \cite[Section 6.2]{HJ}) show that $\tilde{L}''$ is indeed invertible and thus also positive definite. But strict inequality in \eqref{diagonal_dominance} holds for some $j \geq B+1$ since otherwise no interior vertex would have a neighboring boundary vertex, so the set of interior vertices would be empty by connectedness.
\end{proof}

If $\tilde{L}$ is invertible, we say that $0$ is not an eigenvalue of \eqref{resistor_dp}. In this case it is possible to define the Dirichlet-to-Neumann map of the resistor network as the operator acting on functions $f: V_B \to \mR$ by 
\begin{equation*}
\Lambda_{\gamma} f(x_j) = \sum_{x_k \in \mathcal{N}(x_j)} \gamma_{jk} [u(x_j)-u(x_k)], \quad j=1,\ldots,B,
\end{equation*}
where $u$ is the unique solution of \eqref{resistor_dp}.

\section{Correspondence of FEM models and resistor networks} \label{sec_equivalence}

We wish to relate the FEM model to resistor networks. To do this we need to consider resistor networks which can be drawn as the triangulation of a simple polygon in $\mR^2$. A (planar straight-line) \emph{drawing} of a graph $(V,E)$ is a pair of maps $\varphi: V \to \mR^2$ and $\psi: E \times [0,1] \to \mR^2$, where $\varphi$ is injective and $\psi$ satisfies the following conditions:
\begin{enumerate}
\item[(1)]
if $e = \{x,y\} \in E$ then $\psi_e = \psi(e,\,\cdot\,)$ is a line segment with end points $\varphi(x)$ and $\varphi(y)$,
\item[(2)]
$\psi_e([0,1])$ does not intersect $\varphi(V)$ except at the end points,
\item[(3)]
for any $e, \tilde{e} \in E$ with $e \neq \tilde{e}$ the open line segments $\psi_e((0,1))$ and $\psi_{\tilde{e}}((0,1))$ do not intersect.
\end{enumerate}
The components of $\mR^2 \smallsetminus \psi(E \times [0,1])$ are called the faces of the graph (there is exactly one unbounded face). The number of faces is independent of the embedding by Euler's formula. A graph which admits a drawing is called planar. Planarity can be checked directly from the abstract graph (for instance Kuratowski's condition). Also, there is no loss of generality in using straight-line embeddings by Fary's theorem. See \cite{Diestel} for these facts.

\begin{definition}
A \emph{triangular resistor network} is a resistor network $(V,E)$ such that
\begin{enumerate}
\item[(1)]
there is a drawing of $(V,E)$ for which every face (except the unbounded one) is a triangle, 
\item[(2)]
the interior of the complement of the unbounded face, which we denote by $\Omega$ (the \emph{domain}), is such that $\closure{\Omega}$ is connected,
\item[(3)]
$\varphi(V_B) = \varphi(V) \cap \partial \Omega$.
\end{enumerate}
\end{definition}

\begin{remark}
There is a notion of triangulation in graph theory which only uses the abstract graph, but this seems to be different from what we need here.
\end{remark}

If $\sigma$ is an anisotropic conductivity in a triangulated set $\closure{\Omega}$, we denote by $L(\sigma)$ the matrix in \eqref{fem_equation}. Also, if $(V,V_B,E,\gamma)$ is a resistor network we denote by $\tilde{L}(\gamma)$ the matrix in \eqref{rn_equation}. It is easy to see that anisotropic conductivities give rise to resistor networks in this way.

\begin{thm} \label{thm_main_fem_to_resistor_general}
Let $\Omega \subset \mR^2$ be the interior of a simple polygon, and let $\sigma \in L^{\infty}(\Omega; \mR^{2\times 2})$ be a symmetric matrix function such that $0$ is not an eigenvalue of \eqref{fem_dp}. If a triangulation of $\closure{\Omega}$ with vertices $V$, boundary vertices $V_B = V \cap \partial \Omega$, and edges $E$ is given, then there is a unique function $\gamma$ such that the triangular resistor network $R = (V,V_B,E,\gamma)$ satisfies $\tilde{L}(\gamma) = L(\sigma)$ and $\Lambda_{\gamma} = \Lambda_{\sigma}^{\mathrm{FEM}}$.
\end{thm}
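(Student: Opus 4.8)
The plan is to read the conductivity directly off the stiffness form and to let the two requirements $\tilde L(\gamma)=L(\sigma)$ and $\Lambda_\gamma=\Lambda_\sigma^{\mathrm{FEM}}$ fix $\gamma$ on two complementary classes of edges. For all indices $j,k$ set $a_{jk}=\int_\Omega \sigma\nabla v_j\cdot\nabla v_k\,dx$; this symmetric array satisfies $a_{jk}=0$ unless $\{x_j,x_k\}\in E$ (since $\nabla v_j\cdot\nabla v_k$ is supported on triangles having both $x_j,x_k$ as vertices), and it has vanishing row sums $\sum_k a_{jk}=\int_\Omega\sigma\nabla v_j\cdot\nabla(\sum_k v_k)\,dx=0$ because $\sum_k v_k\equiv 1$. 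By definition $L(\sigma)$ is obtained from $(a_{jk})$ by replacing its first $B$ rows with those of the identity, and formula \eqref{FEM DN map} reads $\Lambda_\sigma^{\mathrm{FEM}}f(x_j)=\sum_{k=1}^N a_{jk}u_k$ for $j\le B$. The natural candidate is therefore $\gamma(\{x_j,x_k\})=-a_{jk}$, which is well defined (symmetric, and supported on $E$).

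\emph{Existence.} With this choice the off-diagonal interior entries of $\tilde L(\gamma)$ are $-\gamma_{jk}=a_{jk}=l_{jk}(\sigma)$ for $j\ge B+1$, $k\ne j$, while the diagonal entries are $\tilde l_{jj}=\sum_{k}\gamma_{jk}=-\sum_{k\ne j}a_{jk}=a_{jj}=l_{jj}(\sigma)$ by the zero row-sum property; the first $B$ rows of both matrices are the identity, so $\tilde L(\gamma)=L(\sigma)$. In particular $\tilde L(\gamma)$ is invertible and the two Dirichlet problems share the same solution $u$ for every boundary datum $f$. Expanding the resistor flux at a boundary vertex (the sum over neighbours equals the sum over all $k\ne j$, as $\gamma$ vanishes off $E$) and using the row-sum identity once more gives
$$
\Lambda_\gamma f(x_j)=\sum_{k\ne j}\gamma_{jk}(u_j-u_k)=-u_j\sum_{k\ne j}a_{jk}+\sum_{k\ne j}a_{jk}u_k=\sum_{k=1}^N a_{jk}u_k=\Lambda_\sigma^{\mathrm{FEM}}f(x_j),
$$
so $\Lambda_\gamma=\Lambda_\sigma^{\mathrm{FEM}}$.

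\emph{Uniqueness.} The identity $\tilde L(\gamma)=L(\sigma)$ alone pins down $\gamma$ only on edges touching an interior vertex: reading the rows $j\ge B+1$ forces $\gamma_{jk}=-a_{jk}$ whenever $j\ge B+1$, hence (by symmetry of $\gamma$) whenever $\{x_j,x_k\}$ has at least one interior endpoint. The boundary--boundary edges are invisible to this matrix identity, because both $\tilde L$ and $L$ carry identity first rows regardless of the conductivities placed on such edges; it is exactly the Dirichlet-to-Neumann condition that removes this freedom. Since the solutions $u$ already agree, substituting $\gamma_{jk}=-a_{jk}$ on the interior-touching edges and computing the difference of the two boundary maps yields, for each $j\le B$,
$$
\Lambda_\gamma f(x_j)-\Lambda_\sigma^{\mathrm{FEM}}f(x_j)=\sum_{\substack{k\le B\\ k\ne j}}\bigl(a_{jk}+\gamma_{jk}\bigr)(f_j-f_k),
$$
the interior contributions cancelling after one further use of $\sum_k a_{jk}=0$. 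Testing this linear form on $f=e_j$ and on $f=e_m$ shows each coefficient $a_{jk}+\gamma_{jk}$ must vanish, i.e.\ $\gamma_{jk}=-a_{jk}$ on the boundary edges as well. Thus $\gamma$ is unique and equals $-a_{jk}$ on every edge.

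The point to get right is the bookkeeping for the boundary--boundary edges: one must recognize that $\tilde L(\gamma)=L(\sigma)$ is genuinely blind to them, so that existence follows from one clean choice but uniqueness rests entirely on the explicit cancellation in the Dirichlet-to-Neumann difference. The remaining steps are elementary once the zero row-sum identity $\sum_k a_{jk}=0$ is applied consistently in both the diagonal matching and the flux computation.
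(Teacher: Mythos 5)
Your construction is exactly the paper's: you set $\gamma_{jk}=-\int_\Omega \sigma\nabla v_j\cdot\nabla v_k\,dx$ on every edge and use the zero row-sum identity to match the diagonal entries and the boundary fluxes, so the existence half coincides with the published proof. Where you go beyond the paper is the uniqueness half: the paper's written proof only exhibits the choice of $\gamma$ and leaves uniqueness implicit, whereas you correctly observe that $\tilde L(\gamma)=L(\sigma)$ pins down $\gamma$ only on edges with an interior endpoint (the first $B$ rows of both matrices being identity rows regardless of boundary--boundary conductivities), and you then extract the remaining values from the Dirichlet-to-Neumann condition via the cancellation
$$
\Lambda_\gamma f(x_j)-\Lambda_\sigma^{\mathrm{FEM}}f(x_j)=\sum_{\substack{k\le B\\ k\ne j}}\bigl(\gamma_{jk}+a_{jk}\bigr)(f_j-f_k),
$$
tested against basis vectors $f=e_m$. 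I verified this computation (the interior terms cancel after one use of $\sum_k a_{jk}=0$, using that the two Dirichlet problems share the same solution once the matrices agree), and it is correct; your bookkeeping for the boundary--boundary edges is a genuine and worthwhile addition that makes the ``unique'' claim in the theorem statement fully rigorous.
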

\begin{proof}
With the notation in Section \ref{sec_fem}, we choose 
$$
\gamma_{jk} = -\int_{\Omega} \sigma \nabla v_j \cdot \nabla v_k \,dx, \qquad j \geq B+1, \ \ k \neq j.
$$
Note that $\int_{\Omega} \sigma \nabla v_j \cdot \nabla v_k \,dx = 0$ if $x_k \notin \mathcal{N}(x_j)$. Using the fact that 
$$
\sum_{k=1}^N \int_{\Omega} \sigma \nabla v_j \cdot \nabla v_k \,dx = \int_{\Omega} \sigma \nabla v_j \cdot \nabla(1) \,dx = 0,
$$
we have $\tilde{l}_{jj} = \sum_{x_k \in \mathcal{N}(x_j)} \gamma_{jk} = \int_{\Omega} \sigma \abs{\nabla v_j}^2 \,dx$ and thus $\tilde{L}(\gamma) = L(\sigma)$. For the equality of Dirichlet-to-Neumann maps we also choose 
$$
\gamma_{jk} = -\int_{\Omega} \sigma \nabla v_j \cdot \nabla v_k \,dx, \quad j =1,\ldots,B, \ \ k \neq j.
$$
This implies $\sum_{x_k \in \mathcal{N}(x_j)} \gamma_{jk} = \int_{\Omega} \sigma \abs{\nabla v_j}^2 \,dx$ as above, so $\Lambda_{\gamma} = \Lambda_{\sigma}^{\mathrm{FEM}}$.
\end{proof}

\begin{proof}[Proof of Theorem \ref{thm_main_fem_to_resistor}]
This is a special case of the previous theorem when $\sigma \in PC_+(\mathscr{T})$.
\end{proof}

\begin{remark}
If $\sigma$ is a positive definite matrix then $\tilde{L}(\gamma)$ is positive definite and the diagonal entries $\tilde{l}_{jj}$ are positive, but apparently some of the conductivities $\gamma_{jk}$ may be negative unless additional restrictions are imposed.
\end{remark}

In the converse direction there are many conductivities $\sigma$ corresponding to a given resistor network. However, if one restricts to conductivities which are constant inside each triangle, it is possible to characterize all such $\sigma$ in terms of directed resistor networks. To describe the characterization, we let $(V,E)$ be a resistor network and consider a splitting into two parts of all the resistors associated with interior edges of $E$. Informally, this is done as follows:

\begin{enumerate}
\item 
Let $E_I = \{ e \in E \,;\, \text{at least one endpoint of $e$ is an interior vertex} \}$.
\item 
For each $e = \{x_j,x_k\} \in E_I$ choose an ordered pair $e' = (x_j,x_k)$, and let $D_I = \{ e' \,;\, e \in E_I \}$ be the corresponding set of directed edges.
\item 
Let $\alpha$ be a function $D_I \to \mR$.
\item
For any edge $e \in E_I$ with corresponding directed edge $e'$, we split the resistor $\gamma(e)$ associated with $e$ in two resistors, one on the ''left'' of $e'$ and one on the ''right'' of $e'$, such that the resistor on the left has conductivity $\frac{1}{2} \gamma(e) + \alpha(e')$ and the one on the right has conductivity $\frac{1}{2} \gamma(e)-\alpha(e')$.
\end{enumerate}

It is easy to formalize the notion of a directed resistor network. A directed graph is a set $(V,D)$ where $V$ (vertices) is a finite set, and $D$ (directed edges) is a set of ordered pairs of elements in $V$.

\begin{definition}
A \emph{directed resistor network} is a directed graph $(V,D)$ together with a distinguished set $V_B \subset V$ (boundary vertices) and a function $\mu: D \to \mR$.
\end{definition}

We now describe the construction of piecewise constant conductivities which correspond to a given triangular resistor network $(V,E,V_B,\gamma)$ with domain $\Omega \subset \mR^2$. Let $D_I$ be some choice of directed edges corresponding to $E_I$ as above, and let $\alpha$ be a function $D_I \to \mR$. We wish to find a symmetric matrix function $\sigma = \sigma_{\alpha}$ satisfying the following conditions:
\begin{enumerate}
\item[(a)] 
$\sigma$ is a constant matrix in each open triangle,
\item[(b)]
if $x_j$ is any interior vertex and $x_k \in \mathcal{N}(x_j)$, if $e'$ is the directed edge corresponding to $\{x_j,x_k\}$, and if $T$ and $T'$ are the two triangles which have both $x_j$ and $x_k$ as vertices such that $\partial T$ has the same orientation as $e'$ (so then $\partial T'$ has the opposite orientation as $e'$), then 
\begin{align*}
\int_T \sigma \nabla v_j \cdot \nabla v_k \,dx &= -\frac{1}{2}\gamma_{jk} - \alpha(e'), \\
\int_{T'} \sigma \nabla v_j \cdot \nabla v_k \,dx &= -\frac{1}{2}\gamma_{jk} + \alpha(e'),
\end{align*}
\item[(c)]
if $x_j$ and $x_k$ are neighboring boundary vertices, and if $T$ is the unique triangle which has both $x_j$ and $x_k$ as vertices, then 
\begin{equation*}
\int_T \sigma \nabla v_j \cdot \nabla v_k \,dx = -\gamma_{jk}.
\end{equation*}
\end{enumerate}

The next result shows that such a conductivity $\sigma = \sigma_{\alpha}$ always exists and it is uniquely determined by the above conditions.

\begin{thm}
\label{network fem equivalence2}
Let $(V,E,V_B,\gamma)$ be a triangular resistor network with domain $\Omega \subset \mR^2$ such that $0$ is not a Dirichlet eigenvalue, and let $D_I$ be as above. Then for any function $\alpha: D_I \to \mR$ there is unique symmetric matrix function $\sigma_{\alpha} \in L^{\infty}(\Omega ; \mR^{2 \times 2} )$ satisfying (a)--(c). Any such $\sigma_{\alpha}$ satisfies $L(\sigma_{\alpha}) = \tilde{L}(\gamma)$ and $\Lambda_{\sigma_{\alpha}}^{\text{FEM}} = \Lambda_{\gamma}$. Conversely, if $\sigma$ is a symmetric matrix function which is constant in each open triangle and satisfies $L(\sigma) = \tilde{L}(\gamma)$ and $\Lambda_{\sigma}^{\text{FEM}} = \Lambda_{\gamma}$, then $\sigma = \sigma_{\alpha}$ for some function $\alpha: D_I \to \mR$.
\end{thm}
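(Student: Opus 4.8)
The plan is to reduce everything to a single linear-algebra statement on one triangle and then to reassemble the global matrices edge by edge. Fix a non-degenerate triangle $T\in\mathscr{T}$ with vertices $x_p,x_q,x_r$, and write $g_i=\nabla v_i|_T$ for the (constant) gradients on $T$; these satisfy $g_p+g_q+g_r=0$ and any two of them are linearly independent. Since $\sigma|_T$ is a constant symmetric matrix, the local contributions are $\int_T\sigma\nabla v_i\cdot\nabla v_j\,dx=\abs{T}\,\sigma g_i\cdot g_j$, and the diagonal contributions are determined by the off-diagonal ones through the relation $g_p+g_q+g_r=0$. The key step (the \emph{local bijection}) is to show that the linear map sending a symmetric $2\times 2$ matrix $M$ to the triple $(Mg_p\cdot g_q,\;Mg_q\cdot g_r,\;Mg_p\cdot g_r)\in\mR^3$ is a bijection. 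Both spaces are $3$-dimensional, so it suffices to prove injectivity: if all three entries vanish, then substituting $g_r=-g_p-g_q$ and using the symmetry of $M$ forces $Mg_p\cdot g_p=Mg_q\cdot g_q=Mg_p\cdot g_q=0$, i.e.\ the symmetric bilinear form associated with $M$ vanishes on the basis $\{g_p,g_q\}$, hence $M=0$.

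Granting the local bijection, I would prove the first assertion (existence and uniqueness of $\sigma_\alpha$) as follows. Conditions (b) and (c) prescribe, for each triangle $T$ and each of its three edges, a definite value for the local integral $\int_T\sigma\nabla v_j\cdot\nabla v_k\,dx$ (namely $-\tfrac12\gamma_{jk}\mp\alpha(e')$ on the two triangles sharing a split interior edge, and $-\gamma_{jk}$ on a boundary edge). By the local bijection there is a unique constant symmetric $\sigma|_T$ realizing these three prescribed numbers; carrying this out triangle by triangle yields the unique symmetric matrix function $\sigma_\alpha$, constant on each triangle, satisfying (a)--(c).

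For the second assertion I would reassemble the global stiffness matrix. Each global off-diagonal integral $\int_\Omega\sigma_\alpha\nabla v_j\cdot\nabla v_k\,dx$ is the sum of the local contributions over the triangles containing both $x_j$ and $x_k$: for an interior edge the two contributions $-\tfrac12\gamma_{jk}-\alpha(e')$ and $-\tfrac12\gamma_{jk}+\alpha(e')$ add to $-\gamma_{jk}$, and for a boundary edge the single contribution is $-\gamma_{jk}$. Thus $\int_\Omega\sigma_\alpha\nabla v_j\cdot\nabla v_k\,dx=-\gamma_{jk}$ for every $j\neq k$, i.e.\ the full (unreduced) stiffness matrix of $\sigma_\alpha$ agrees entrywise off the diagonal with the full graph Laplacian of $\gamma$; since both have vanishing row sums, the diagonals agree too. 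Reducing the first $B$ rows to the identity gives $L(\sigma_\alpha)=\tilde L(\gamma)$, and feeding the identical full matrices and the common Dirichlet solution $u=L(\sigma_\alpha)^{-1}(f,0)^t$ into the explicit formula \eqref{FEM DN map} and the definition of $\Lambda_\gamma$ gives $\Lambda_{\sigma_\alpha}^{\mathrm{FEM}}=\Lambda_\gamma$.

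The converse runs the same computation backwards. First I would note that $L(\sigma)=\tilde L(\gamma)$ together with $\Lambda_\sigma^{\mathrm{FEM}}=\Lambda_\gamma$ forces the full stiffness matrices to coincide: equality of the reduced matrices gives agreement of the rows $j\geq B+1$, symmetry propagates this to the top-right block, and the DN-map identity pins down the remaining boundary-boundary block; hence $\int_\Omega\sigma\nabla v_j\cdot\nabla v_k\,dx=-\gamma_{jk}$ for all $j\neq k$. Then, for each interior edge with chosen orientation $e'=(x_j,x_k)$ and distinguished triangle $T$, I would simply \emph{define} $\alpha(e'):=-\tfrac12\gamma_{jk}-\int_T\sigma\nabla v_j\cdot\nabla v_k\,dx$; the global identity forces the complementary triangle to carry the value $-\tfrac12\gamma_{jk}+\alpha(e')$, and boundary edges automatically satisfy (c), so $\sigma$ satisfies (a)--(c) for this $\alpha$ and the uniqueness from the first assertion gives $\sigma=\sigma_\alpha$. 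I expect the local bijection to be the real content and the main obstacle, since it is where the planar geometry of the triangle — the relations $g_p+g_q+g_r=0$ and pairwise independence — is actually used; the remaining steps are careful bookkeeping, the one point requiring attention being that the prescriptions (b)--(c) assign exactly one value to each triangle-edge and are compatible across the two triangles sharing an interior edge.
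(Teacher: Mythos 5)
Your proposal is correct and follows essentially the same route as the paper: the paper likewise reduces to unique solvability of the $3\times 3$ linear system \eqref{sigma_triangle_equations} on each triangle (your ``local bijection'', proved there via the linear independence of $\nabla v_k$ and $\nabla v_l$ rather than the relation $g_p+g_q+g_r=0$), verifies $\int_\Omega \sigma_\alpha \nabla v_j \cdot \nabla v_k\,dx = -\gamma_{jk}$ by summing the two triangle contributions, and proves the converse by defining $\alpha(e') = -\tfrac12\gamma_{jk} - \int_T \sigma \nabla v_j \cdot \nabla v_k\,dx$ and invoking uniqueness. Your only genuine addition is spelling out why the two identities force the full stiffness matrix to agree entrywise with the graph Laplacian (the symmetry and boundary--boundary block argument), a step the paper dispatches with ``writing out the definitions''.
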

\begin{proof}
Let $\alpha$ be a function $D_I \to \mR$. We define $\alpha$ also for boundary edges as follows: if $e = \{x_j,x_k\} \in E$ where both $x_j$ and $x_k$ are boundary vertices, let $T$ be the unique triangle having both $x_j$ and $x_k$ as vertices, let $e'$ be the directed edge corresponding to $e$ such that $\partial T$ has the same orientation as $e'$, and define $\alpha(e') = \frac{1}{2} \gamma_{jk}$.

We first show the existence of $\sigma_{\alpha}$ satisfying (a)--(c). Choose any triangle $T$, and let $x_j, x_k, x_l$ be its vertices. If $e_{jk}'$ is the directed edge corresponding to $e_{jk} = \{x_j,x_k\}$, define 
\begin{equation} \label{mujk_definition}
\mu_{jk} = \left\{ \begin{array}{cl} \frac{1}{2} \gamma_{jk} + \alpha(e_{jk}') & \quad \text{if $\partial T$ has the same orientation as $e_{jk}'$}, \\ \frac{1}{2} \gamma_{jk} - \alpha(e_{jk}') & \quad \text{otherwise}. \end{array} \right.
\end{equation}
The numbers $\mu_{kl}$ and $\mu_{lj}$ are defined analogously. Consider the three equations 
\begin{align}
\int_T \sigma \nabla v_j \cdot \nabla v_k \,dx &= -\mu_{jk}, \notag \\
\int_T \sigma \nabla v_k \cdot \nabla v_l \,dx &= -\mu_{kl}, \label{sigma_int_triangle_equations} \\
\int_T \sigma \nabla v_l \cdot \nabla v_j \,dx &= -\mu_{lj}. \notag
\end{align}
Since $\sigma$ and each $\nabla v_m$ are constant in $T$, these are equivalent with 
\begin{align}
\sigma \nabla v_j \cdot \nabla v_k &= -\mu_{jk}/\abs{T}, \notag \\
\sigma \nabla v_k \cdot \nabla v_l &= -\mu_{kl}/\abs{T}, \label{sigma_triangle_equations} \\
\sigma \nabla v_l \cdot \nabla v_j &= -\mu_{lj}/\abs{T}. \notag
\end{align}

Writing $\sigma = \left( \begin{smallmatrix} a & b \\ b & c \end{smallmatrix} \right)$ in $T$ for some real constants $a,b,c$, \eqref{sigma_triangle_equations} becomes a linear system of three equations in three unknowns (the constants $a,b,c$). The corresponding homogeneous system is 
\begin{align*}
\sigma \nabla v_j \cdot \nabla v_k &= 0, \\
\sigma \nabla v_k \cdot \nabla v_l &= 0, \\
\sigma \nabla v_l \cdot \nabla v_j &= 0.
\end{align*}
This only has the zero solution $\sigma = 0$, since for instance $\nabla v_k$ and $\nabla v_l$ are linearly independent so for any $w \in \mR^2$ one obtains 
\begin{equation*}
\sigma \nabla v_j \cdot w = \sigma \nabla v_j \cdot (w_1 \nabla v_k + w_2 \nabla v_l) = 0.
\end{equation*}
Consequently $\sigma \nabla v_j = 0$, and analogously $\sigma \nabla v_k = \sigma \nabla v_l = 0$ which implies $\sigma = 0$. This shows that the original system \eqref{sigma_int_triangle_equations} has a unique solution, and so we obtain a constant matrix $\sigma$ in $T$ satisfying \eqref{sigma_int_triangle_equations}.

We denote by $\sigma = \sigma_{\alpha}$ the matrix in $\Omega$ which is defined by the above procedure in each triangle. Clearly (a) is satisfied, and also (b) and (c) are valid by the definition of $\sigma_{\alpha}$. Also, if $\tilde{\sigma}$ is another matrix satisfying (a)--(c) then $\sigma-\tilde{\sigma}$ satisfies (a)--(c) with all right hand sides replaced by zero, and localizing to each triangle and using the uniqueness of solutions to $3 \times 3$ systems as above implies that $\sigma = \tilde{\sigma}$.

Note that one has  $L(\sigma) = \tilde{L}(\gamma)$ and $\Lambda_{\sigma}^{\text{FEM}} = \Lambda_{\gamma}$ if and only if 
\begin{equation} \label{sigma_gamma_cond1}
\int_{\Omega} \sigma \nabla v_j \cdot \nabla v_k \,dx = -\gamma_{jk}, \qquad j \geq B+1, \ \ k \neq j.
\end{equation}
This statement follows by writing out the definitions of the matrices and Dirichlet-to-Neumann maps and using that $\sum_{m=1}^N \int_{\Omega} \sigma \nabla v_j \cdot \nabla v_m \,dx = 0$. To prove \eqref{sigma_gamma_cond1}, if at least one of $x_j$, $x_k$ is an interior vertex, there are two triangles $T$ and $T'$ having both $x_j$ and $x_k$ as vertices. We choose $T$ to be the triangle such that $\partial T$ has the same orientation as the directed edge $e'$ corresponding to $\{x_j,x_k\}$. One has 
\begin{align*}
\int_{\Omega} \sigma \nabla v_j \cdot \nabla v_k \,dx &= \int_T \sigma \nabla v_j \cdot \nabla v_k \,dx + \int_{T'} \sigma \nabla v_j \cdot \nabla v_k \,dx \\
 &= \left[ -\frac{1}{2} \gamma_{jk} -\alpha(e') \right] + \left[ -\frac{1}{2} \gamma_{jk} + \alpha(e') \right] = -\gamma_{jk}
\end{align*}
by the construction of $\sigma_{\alpha}$. On the other hand, if both $x_j$ and $x_k$ are boundary vertices, there is a unique triangle $T$ having both these points as vertices, and again by construction 
\begin{equation*}
\int_{\Omega} \sigma \nabla v_j \cdot \nabla v_k \,dx = \int_T \sigma \nabla v_j \cdot \nabla v_k \,dx = -\gamma_{jk}.
\end{equation*}
Thus $L(\sigma) = \tilde{L}(\gamma)$ and $\Lambda_{\sigma}^{\text{FEM}} = \Lambda_{\gamma}$.

Finally we prove that for any symmetric matrix $\sigma$ which is constant in each open triangle and satisfies $L(\sigma) = \tilde{L}(\gamma)$ and $\Lambda_{\sigma}^{\text{FEM}} = \Lambda_{\gamma}$, one has $\sigma = \sigma_{\alpha}$ for some $\alpha: D_I \to \mR$. This is easy at this point. The assumption implies that \eqref{sigma_gamma_cond1} holds. Let $e' \in D_I$, and let $e = \{x_j,x_k\}$ be the corresponding undirected edge where $x_j$ is an interior vertex. There are two triangles $T$ and $T'$ having both $x_j$ and $x_k$ as vertices, chosen so that $\partial T$ has the same orientation as $e'$. Note that by \eqref{sigma_gamma_cond1} 
\begin{equation*}
\int_T \sigma \nabla v_j \cdot \nabla v_k \,dx + \int_{T'} \sigma \nabla v_j \cdot \nabla v_k \,dx = -\gamma_{jk}.
\end{equation*}
We define 
\begin{equation*}
\alpha(e') = - \frac{1}{2} \gamma_{jk} -\int_T \sigma \nabla v_j \cdot \nabla v_k \,dx.
\end{equation*}
Then 
\begin{equation*}
\int_{T'} \sigma \nabla v_j \cdot \nabla v_k \,dx = -\gamma_{jk} - \int_T \sigma \nabla v_j \cdot \nabla v_k \,dx = - \frac{1}{2} \gamma_{jk} + \alpha(e').
\end{equation*}

It follows that with this definition of $\alpha$, $\sigma$ satisfies conditions (a) and (b) above. If $x_j$ and $x_k$ are boundary vertices, there is a unique triangle $T$ having both these points as vertices and therefore by \eqref{sigma_gamma_cond1} 
\begin{equation*}
\int_T \sigma \nabla v_j \cdot \nabla v_k \,dx = -\gamma_{jk}.
\end{equation*}
Thus $\sigma$ also satisfies (c). Since $\sigma$ is uniquely determined by (a)--(c), it follows that $\sigma = \sigma_{\alpha}$.
\end{proof}

\begin{proof}[Proof of Theorem \ref{network fem equivalence}]
Theorem \ref{network fem equivalence} follows from the previous theorem after fixing some choice of directed interior edges $D_I$ corresponding to the set $E_I$ of non-directed interior edges, and after identifying the function $\alpha: E_I \to \mR$ with $\alpha: D_I \to \mR$.
\end{proof}

\begin{remark}
An isomorphism of two directed graphs $(V,D)$ and $(V',D')$ is a bijective map $\phi: V \to V'$ for which $(x,y) \in D$ if and only if $(\phi(x), \phi(y)) \in D'$. Note that an isomorphism $\Phi$ of two triangulations $\mathscr{T}$ and $\mathscr{T}'$ induces an isomorphism of the associated directed graphs, and conversely two triangulations $\mathscr{T}$ and $\mathscr{T}'$ whose associated graphs are isomorphic via some $\phi$ are isomorphic via a map $\Phi$ that is uniquely determined by $\varphi$. Indeed, the map $\Phi$ is uniquely determined by its values at the vertex points.
\end{remark}

The next result gives a condition that characterizes when the matrix $\sigma_{\alpha}$ above is positive semidefinite.

\begin{lemma}
\label{positivity}
The conductivity matrix $\sigma$ constructed in Theorem \ref{network fem equivalence} is positive semidefinite if and only if on each triangle $T$ with vertices $x_j$, $x_k$, and $x_l$, the conductivities $\mu_{j,k}$, $\mu_{l,j}$ and $\mu_{k,l}$  that appear in the proof of the theorem  and are associated with the directed edges of the triangle are chosen to satisfy 
\begin{gather*}
\mu_{jk} + \mu_{kl} + \mu_{lj} \geq 0, \\
\mu_{jk}\mu_{kl} + \mu_{jk} \mu_{lj} + \mu_{kl} \mu_{lj} \geq 0.
\end{gather*}
\end{lemma}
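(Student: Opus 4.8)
The plan is to reduce to a single triangle and read off positive semidefiniteness of the $2\times 2$ symmetric matrix $\sigma|_T$ from its determinant together with the sum of its quadratic-form values on the three gradient vectors. Since $\sigma \in PC(\mathscr{T})$, it is positive semidefinite on $\Omega$ exactly when it is so on each triangle, so I would fix a triangle $T$ with vertices $x_j, x_k, x_l$ and work there. Writing $g_m = \nabla v_m|_T$ for $m \in \{j,k,l\}$, these are constant vectors in $\mR^2$ satisfying $g_j + g_k + g_l = 0$ (because $v_j + v_k + v_l \equiv 1$ on $T$), and any two of them are linearly independent, as already noted in the proof of Theorem~\ref{network fem equivalence2}. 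The defining relations \eqref{sigma_triangle_equations} read $\sigma g_j \cdot g_k = -\mu_{jk}/\abs{T}$, and cyclically for the other pairs. Set $Q(\xi) = \sigma \xi \cdot \xi$.

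First I would compute the diagonal values $Q(g_m)$. Pairing $g_j + g_k + g_l = 0$ with $\sigma g_j$ gives $Q(g_j) + \sigma g_j \cdot g_k + \sigma g_j \cdot g_l = 0$, hence $Q(g_j) = (\mu_{jk} + \mu_{lj})/\abs{T}$, and cyclically for $Q(g_k), Q(g_l)$. Summing,
\[
Q(g_j) + Q(g_k) + Q(g_l) = \frac{2}{\abs{T}}\bigl( \mu_{jk} + \mu_{kl} + \mu_{lj}\bigr).
\]
For the determinant I would invoke the Gram identity $\det(M^t \sigma M) = (\det \sigma)\,\det(M^t M)$ with $M = [\,g_k \mid g_l\,]$: the left side is $Q(g_k)Q(g_l) - (\sigma g_k \cdot g_l)^2$, which a direct substitution of the three values above turns into $\abs{T}^{-2}(\mu_{jk}\mu_{kl} + \mu_{kl}\mu_{lj} + \mu_{lj}\mu_{jk})$, while $\det(M^t M) = \abs{g_k}^2 \abs{g_l}^2 - (g_k \cdot g_l)^2 =: G > 0$ since $g_k, g_l$ are independent. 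Hence
\[
\det \sigma = \frac{1}{G\,\abs{T}^2}\bigl(\mu_{jk}\mu_{kl} + \mu_{kl}\mu_{lj} + \mu_{lj}\mu_{jk}\bigr).
\]
Both prefactors are strictly positive, so the second displayed inequality of the lemma is precisely $\det\sigma \ge 0$, and the first is precisely $Q(g_j)+Q(g_k)+Q(g_l) \ge 0$.

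It then remains to conclude that, for a symmetric $2\times 2$ matrix, $\det \sigma \ge 0$ together with $Q(g_j)+Q(g_k)+Q(g_l)\ge 0$ is equivalent to $\sigma \succeq 0$. The forward direction is immediate, as $\sigma \succeq 0$ forces each $Q(g_m)\ge 0$ and $\det \sigma \ge 0$. For the converse I would argue via eigenvalues: $\det \sigma \ge 0$ means the two eigenvalues share a sign (or vanish), so if $\sigma$ were not positive semidefinite it would be negative semidefinite and nonzero; then $Q \le 0$ everywhere with $Q < 0$ off the (at most one-dimensional) kernel. As $g_j, g_k, g_l$ span $\mR^2$, at least one of them lies off the kernel, forcing $Q(g_j)+Q(g_k)+Q(g_l) < 0$, a contradiction. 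I expect the only genuinely subtle point to be exactly here: the sum $Q(g_j)+Q(g_k)+Q(g_l)$ is \emph{not} a multiple of $\tr \sigma$ (the matrix $\sum_m g_m g_m^t$ is not scalar, as one already sees on the reference triangle), so the familiar ``$\tr \ge 0$ and $\det \ge 0$'' criterion must be replaced by this spanning-set argument, and one must verify that the three gradients cannot all collapse into a one-dimensional kernel.
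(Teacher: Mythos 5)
Your proof is correct, but it takes a genuinely different route from the paper's. The paper argues by reduction to a normalized case: it places an \emph{equilateral} triangle in explicit coordinates, solves \eqref{sigma_triangle_equations} by hand for the entries $a,b,c$ of $\sigma = \left( \begin{smallmatrix} a & b \\ b & c \end{smallmatrix} \right)$, finds $\mathrm{tr}(\sigma) = \frac{4}{\sqrt{3}}(\mu_{1,2}+\mu_{2,3}+\mu_{3,1})$ and $\det(\sigma) = 4(\mu_{1,2}\mu_{2,3}+\mu_{1,2}\mu_{3,1}+\mu_{2,3}\mu_{3,1})$, applies the trace--determinant criterion for symmetric $2\times 2$ matrices, and then treats a general triangle $T$ by an affine map $F$ with $F(T)$ equilateral, using that positive semidefiniteness and the data $\mu_{jk}$ (being the invariant integrals $\int_T \sigma \nabla v_j \cdot \nabla v_k\,dx$) are preserved under the pushforward $F_*$. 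You instead work intrinsically on an arbitrary triangle: the relation $g_j+g_k+g_l=0$ gives $Q(g_j)=(\mu_{jk}+\mu_{lj})/\abs{T}$ and hence your sum identity, and the Gram identity gives $\det \sigma$ as a strictly positive multiple of $\mu_{jk}\mu_{kl}+\mu_{kl}\mu_{lj}+\mu_{lj}\mu_{jk}$ (your expansion $(\mu_{kl}+\mu_{jk})(\mu_{lj}+\mu_{kl})-\mu_{kl}^2$ checks out, as do the prefactor signs). The price is exactly the subtlety you flag: on a general triangle $\sum_m g_m g_m^t$ is not scalar, so your first condition is not a multiple of $\mathrm{tr}(\sigma)$ and the usual trace criterion must be replaced by your spanning-set argument --- which is sound, since $\det\sigma \geq 0$ together with $\sigma \not\succeq 0$ forces $\sigma$ to be negative semidefinite and nonzero, hence with kernel of dimension at most one, which cannot contain two linearly independent gradients. (It is worth noting that for the equilateral triangle $\sum_m g_m g_m^t$ \emph{is} scalar, which is precisely why the paper's normalization makes the trace criterion directly applicable there.) In terms of what each approach buys: the paper's computation is completely explicit but leans on the affine-invariance step, which it states rather tersely; your argument avoids coordinates and the reduction altogether and makes transparent why these two symmetric functions of the $\mu$'s govern positivity (one is the sum of quadratic-form values over the gradients, the other is, up to the positive factor $1/(G\abs{T}^2)$, the determinant), at the cost of the small extra lemma that, for symmetric $2\times 2$ matrices, $\det \geq 0$ plus nonnegativity of $Q$ summed over a spanning set characterizes positive semidefiniteness.
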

\begin{proof}
Let $T$ be a triangle with vertices which we assume without loss of generality are $x_1$, $x_2$, and $x_3$. Assume first the special case that $T$ is a a equilateral triangle, and choose coordinates so that 
$$
x_1 = \left( \begin{array}{c} -l/\sqrt{3} \\ 0 \end{array} \right), \quad x_2 = \left( \begin{array}{c} l/\sqrt{3} \\ 0 \end{array} \right), \quad x_3 = \left( \begin{array}{c} 0 \\ l \end{array} \right)
$$
where $l > 0$. Then $\abs{T} = l^2/\sqrt{3}$, and in $T$ we have 
$$
\nabla v_1 = \frac{1}{l} \left( \begin{array}{c} -\sqrt{3}/2 \\ -1/2 \end{array} \right), \quad \nabla v_2 = \frac{1}{l} \left( \begin{array}{c} \sqrt{3}/2 \\ -1/2 \end{array} \right), \quad \nabla v_3 = \frac{1}{l} \left( \begin{array}{c} 0 \\ 1 \end{array} \right).
$$
Solving the equation (\ref{sigma_triangle_equations}) for $\sigma = \left( \begin{smallmatrix} a & b \\ b & c \end{smallmatrix} \right)$ we get that
\begin{gather*}
a = \frac{1}{\sqrt{3}}(\mu_{3,1} + \mu_{2,3}) + \frac{4}{\sqrt{3}} \mu_{1,2}, \\
b = \mu_{3,1} - \mu_{2,3}, \\
c = \sqrt{3}(\mu_{3,1} + \mu_{2,3}).
\end{gather*}
With these entries, the trace and determinant of $\sigma$ satisfy 
\begin{gather*}
\text{tr}(\sigma) = \frac{4}{\sqrt{3}} (\mu_{1,2} + \mu_{2,3} + \mu_{3,1}), \\
\det(\sigma) = 4(\mu_{1,2} \mu_{2,3} + \mu_{1,2} \mu_{3,1} + \mu_{2,3} \mu_{3,1}).
\end{gather*}
Since $\sigma$ is real and symmetric, we have that $\sigma$ is positive semidefinite if and only if the trace and determinant are $\geq 0$. This is equivalent with the conditions in the statement.

Now for a general triangle $T$, find an affine diffeomorphism $F$ such that $T' := F(T)$ is an equilateral triangle. Then make a change of variables using $F$ to the equation (\ref{sigma_int_triangle_equations}) to conclude that $F_* \sigma$ is positive definite if and only if conditions in this lemma are met. However, $F_* \sigma$ is positive definite if and only if $\sigma$ is positive definite.
\end{proof}

To end this section, we show how the double resistor network construction can be used to prove Theorems \ref{thm_main_nonuniqueness1} and \ref{thm_main_nonuniqueness2}.

\begin{proof}[Proof of Theorem \ref{thm_main_nonuniqueness1}]
Assume that we are given a triangulation $\mathscr{T}$ of a domain $\Omega \subset \mR^2$ that has associated graph $(V,E)$ with interior edges $E_I$ and boundary edges $E_B$, and that we are also given a conductivity matrix $\sigma \in PC_+(\mathscr{T})$ and a function $\beta: E_I \to \mR$.

Recall from Section \ref{sec_intro} that we are looking for a conductivity $\tilde{\sigma} = S_{\beta}(\sigma)$ that satisfies the following three conditions:
\begin{enumerate}
\item[(i)] 
$\tilde{\sigma}$ is a constant matrix in the interior of each triangle in $\mathscr{T}$,
\item[(ii)]
if $e = \{x_j,x_k\} \in E_I$ and $j < k$, and if $T$ and $T'$ are the two triangles of $\mathscr{T}$ having both $x_j$ and $x_k$ as vertices such that $\partial T$ has the same orientation as the directed edge $d = (x_j,x_k)$ (so then $\partial T'$ has the opposite orientation from $d$), then 
\begin{align*}
\int_T \tilde{\sigma} \nabla v_j \cdot \nabla v_k \,dx &= \int_T \sigma \nabla v_j \cdot \nabla v_k \,dx + \beta(e), \\
\int_{T'} \tilde{\sigma} \nabla v_j \cdot \nabla v_k \,dx &= \int_{T'} \sigma \nabla v_j \cdot \nabla v_k \,dx - \beta(e),
\end{align*}

\item[(iii)]
if $e = \{x_j, x_k\} \in E_B$, and if $T$ is the unique triangle which has both $x_j$ and $x_k$ as vertices, then 
\begin{equation*}
\int_T \tilde{\sigma} \nabla v_j \cdot \nabla v_k \,dx = \int_T \sigma \nabla v_j \cdot \nabla v_k \,dx.
\end{equation*}
\end{enumerate}
We define a set of directed edges corresponding to $E_I$ by 
$$
D_I = \{ e' = (x_j, x_k) \,;\, e = \{ x_j, x_k \} \in E_I, \ \ j < k \}.
$$
We will then choose $\tilde{\sigma} = \sigma_{\alpha}$ as in Theorem \ref{network fem equivalence2} for some function $\alpha: D_I \to \mR$ that will depend on $\beta$.

Let us begin by constructing the resistor network $(V,E,V_B,\gamma)$ that is equivalent with the FEM model as in Theorem \ref{thm_main_fem_to_resistor_general} and satisfies $\tilde{L}(\gamma) = L(\sigma)$ and $\Lambda_{\gamma} = \Lambda_{\sigma}^{\mathrm{FEM}}$. This was obtained by taking 
$$
\gamma_{jk} = -\int_{\Omega} \sigma \nabla v_j \cdot \nabla v_k \,dx, \qquad 1 \leq j \leq N, \ \ k \neq j.
$$
(See the proof of Theorem \ref{thm_main_fem_to_resistor_general}.)

Let now $e = \{x_j,x_k\} \in E_I$ where $j < k$, and let $T$ and $T'$ be the two triangles of $\mathscr{T}$ having both $x_j$ and $x_k$ as vertices such that $\partial T$ has the same orientation as the directed edge $e' = (x_j,x_k)$ (so $\partial T'$ has the opposite orientation from $e'$). Looking at the supports of $\nabla v_j$ and $\nabla v_k$, we have 
$$
-\gamma_{jk} = \int_{T} \sigma \nabla v_j \cdot \nabla v_k \,dx + \int_{T'} \sigma \nabla v_j \cdot \nabla v_k \,dx.
$$
We make the choice 
$$
\alpha(e') = -\int_{T} \sigma \nabla v_j \cdot \nabla v_k \,dx - \beta(e) - \frac{1}{2} \gamma_{jk}.
$$
By using the expression for $-\gamma_{jk}$ above, we have 
\begin{align*}
-\frac{1}{2} \gamma_{jk} - \alpha(e') &= \int_{T} \sigma \nabla v_j \cdot \nabla v_k \,dx + \beta(e), \\
-\frac{1}{2} \gamma_{jk} + \alpha(e') &= \int_{T'} \sigma \nabla v_j \cdot \nabla v_k \,dx - \beta(e).
\end{align*}

The above construction defines a function $\alpha: D_I \to \mR$, and Theorem \ref{network fem equivalence2} guarantees that there is a conductivity $\sigma_{\alpha} \in PC(\mathscr{T})$ that satisfies conditions (a)--(c) and also $L(\sigma_{\alpha}) = \tilde{L}(\gamma) = L(\sigma)$ and $\Lambda_{\sigma_{\alpha}}^{\text{FEM}} = \Lambda_{\gamma} = \Lambda_{\sigma}^{\text{FEM}}$. We define $\tilde{\sigma} = \sigma_{\alpha}$. The conditions (a)--(c) imply (i)--(iii), and we also have $L(\tilde{\sigma}) = L(\sigma)$ and $\Lambda_{\tilde{\sigma}}^{\text{FEM}} = \Lambda_{\sigma}^{\text{FEM}}$.
\end{proof}

\begin{proof}[Proof of Theorem \ref{thm_main_nonuniqueness2}]
By Theorem \ref{thm_main_nonuniqueness1}, for any $t \in \mR$ we have 
$$
\Lambda_{\sigma}^{\text{FEM}(\mathscr{T})} = \Lambda_{S_{t\beta}(\sigma)}^{\text{FEM}(\mathscr{T})}.
$$
If $\Phi_t$ is as in the statement of the theorem and if $t$ is sufficiently small, then there is a triangulation $\mathscr{T}_t$ of $\Omega$ that is isomorphic to $\mathscr{T}$ via $\Phi_t$. Also $(\Phi_t)_* S_{t\beta}(\sigma)$ is in $PC_+(\mathscr{T})$ for $t$ small. Theorem \ref{main_thm_diffeo_invariance} then implies that 
$$
\Lambda_{S_{t\beta}(\sigma)}^{\text{FEM}(\mathscr{T})} = \Lambda_{(\Phi_t)_* S_{t\beta}(\sigma)}^{\text{FEM}(\mathscr{T_t})}.
$$
This proves the result.
\end{proof}

\begin{section}{Cloaking for discrete and continuous models} \label{sec_cloaking}

By invisibility cloaking one means the possibility, 
both theoretical and practical, of shielding a
region or object from
detection via electromagnetic fields.  
The invisibility cloaking examples,
see \cite{GKLLU,GKLU1,GLU3,MN,Milton2,Le,PSS1}, have been the starting point for transformation optics,  a very topical 
subject in recent studies in mathematics, physics, and 
material science.

To consider invisibility cloaking results for discrete models, let us recall the corresponding results for  the 
anisotropic conductivity equation (\ref{johty}) in $\Omega\subset \mR^2$,
where the
 conductivity
$\sigma=[\sigma^{jk}(x)]_{j,k=1}^2$ is  a measurable function
which values are symmetric, positive definite matrixes.  
We say that a conductivity $\sigma$  is regular if there are 
$c_1,c_2>0$ such that the matrix $\sigma (x)$ satisfies
\ba
c_1\II \leq \sigma(x)\leq c_2\II,\quad \hbox{for a.e.  }x\in \Omega.
\ea
If conductivity is not regular, it is said to be degenerate.

Let us first  recall the
counterexamples for the uniqueness of the inverse problem that have a close
connection to the invisibility cloaking.
In 2003,
before the appearance of practical possibilities for cloaking,
 it was shown in  \cite{GLU2,GLU3} that passive
objects  can be coated with a layer of material with a degenerate
conductivity which makes the object undetectable by 
the electrostatic boundary
measurements.
These constructions were 
based on the blow-up maps
and gave counterexamples for the uniqueness of  inverse conductivity
problem in the three and
higher dimensional cases.
In two dimensional case, the mathematical theory of the cloaking examples for conductivity
equation have been studied in \cite{KOVW,KSVW,LZ,Ng}.

The interest in cloaking was
raised in particular in 2006 when it was realized
that practical
cloaking constructions are possible using so-called metamaterials
which allow fairly arbitrary specification of electromagnetic material
parameters. The construction of Leonhardt
\cite{Le} was based on conformal mapping on a non-trivial Riemannian surface. 
At the same time, Pendry et al
\cite{PSS1}  proposed a cloaking construction
for Maxwell's equations using a blow-up map 
 and the idea was demonstrated in laboratory experiments
\cite{SMJCPSS}.   Cloaking constructions for the conductivity equation based on the mathematical
models in \cite{KSVW,GLU2} have
also been recently verified experimentally in \cite{dc-cloak}.

Let $\Sigma=\Sigma(\Omega)$ be the class of measurable matrix valued functions 
$\sigma: \Omega \to M$, where $M$ is the set of 
symmetric positive semi-definite matrices.
Instead of defining the   Dirichlet-to-Neumann operator which may not be well defined
for these conductivities, we consider the corresponding quadratic forms.

\begin{definition}\label{def: 1A} {\rm Let $h\in H^{1/2}(\partial \Omega)$.
The Dirichlet-to-Neumann quadratic form corresponding to the conductivity
$\sigma\in \Sigma(\Omega)$ is
given by
\beq\label{def: Dirichlet-to-Neumann map}
Q_\sigma[h]=\inf A_\sigma[u],\quad\hbox{where }
A_\sigma[u]=\int_\Omega \sigma(z)\nabla u(z)\, \,\cdotp\nabla u(z)\,\,dz,
\eeq
and the infimum is taken over real valued
$u\in L^1(\Omega)$ such that $ \nabla u\in L^1(\Omega)^3$
and $u|_{\partial \Omega}=h$. In the case where $Q_\sigma[h] < \infty$ and $A_\sigma[u]$ reaches
its minimum at some $u$, we say that $u$ is a $W^{1,1}(\Omega)$
solution of the conductivity problem.}
\end{definition}

In the case when $\sigma$ is smooth, bounded from below and above by positive
constants, we can define a Dirichlet-to-Neumann
map (\ref{eq: DN map}).
For such regular conductivities
$Q_\sigma[h]$ is the quadratic form corresponding the Dirichlet-to-Neumann
map, that is, 
\beq\label{eq: Q forms}
Q_\sigma[h] =  \int_{\partial \Omega} h \, \Lambda_{\sigma} h\,  ds ,
\eeq
where $ds$ is the length measure on $\p \Omega$.
Physically, $Q_\sigma[h]$ corresponds to the power needed to keep voltage
$h$ at the boundary.  

Let us next consider various counterexamples for the solvability of inverse conductivity problem
with degenerate conductivities.

\medskip

\noindent
\begin{subsection}{Invisibility cloaking for the continuous model}
In this subsection we recall the invisibility  cloaking example  based on the coordinate transformation
by a blow-up map \cite{GLU3}.
Let us consider the general background conductivity $\sigma$.
We aim to coat an arbitrary  body with a layer of exotic material so that the coated  body
 appears in measurements the same as the background conductivity $\sigma$. Usually one
 is interested in the case when  the background conductivity $\sigma$  is equal to
the constant $\gamma=1$. However, we consider here a more general
case and assume that $\sigma$ is a $L^\infty$-smooth conductivity in a domain $\Omega\subset \mR^2$
that contains the closed disc $\overline \ID(2)$. {Here, $\ID(\rho)$ is an open 2-dimensional disc of radius $\rho$ and center zero
and $\overline \ID(\rho)$ is its closure. 
Also, assume that 
$\sigma(z)\geq c_0I,$ $c_0>0$.
Consider 
the homeomorphism
\beq\label{general blow up}
F:\overline \Omega \setminus\{0\}\to
\overline \Omega
\setminus \ID(1),
\eeq
given by
\beq\label{blow up}
 & &F(z)=(\frac {|z|}2+1)\frac z{|z|},\quad z\in \ID(2),\\
 & &F(z)=z,\quad z\in \overline \Omega\setminus  \ID(2).\nonumber
\eeq

Using the map (\ref{blow up}), we  
define a singular conductivity
\beq\label{eq: sing cond}
\tilde \sigma (z)=\left\{\begin{array}{cl}
(F_*\sigma)(z)
   & \hbox{for }z\in \overline \Omega\setminus \ID(1),\\
\eta(z) & \hbox{for }z\in  \ID(1),\end{array}\right.
\eeq
where $\eta(z)=[\eta^{jk}(x)]$ is any symmetric measurable matrix satisfying
$c_1I\leq \eta(z)\leq c_2I$ with $c_1,c_2>0$. 
The conductivity $F_*\sigma$ is called the cloaking conductivity obtained from
the transformation map $F$ and background conductivity $\sigma$ and $\eta(z)$
is the conductivity of the cloaked (i.e.\ hidden) object.

In dimensions $n\geq 3$ it shown in 2003 in \cite{GLU2,GLU3}
{that the Dirichlet-to-Neumann map corresponding to $H^1(\Omega)$ solutions
for the conductivity (\ref{eq: sing cond}) coincide with the
Dirichlet-to-Neumann map for  $\sigma$ when $\sigma=1$.}
In  2008, the analogous result for  Dirichlet-to-Neumann maps was proven in   the two-dimensional case in
\cite{KSVW}. Using the above formalism of Dirichlet-to-Neumann forms it was shown
in \cite{ALP2} that the
quadratic form $Q_{\tilde \sigma}$ coincides with $Q_{\sigma}$. This means that when $\sigma$ is a regular
 conductivity in the domain $\Omega$ containing the disc $\overline \ID(2)$, we can modify
 the conductivity in  $\overline \ID(2)$ to a conductivity $\tilde \sigma$ so that 
 $\ID(1)$  contains the object with conductivity
 $\eta$ and  the conductivity in $\overline \ID(2)\setminus \overline \ID(1)$ is such
 that for the  obtained conductivity $\tilde \sigma$ in $\Omega$  all measurements
 on $\p \Omega$ coincide with those corresponding to $\tilde \sigma$, that is,
 the object $\eta$ is hidden in the conducting medium and one can not detect from
 the boundary even the fact that something is hidden. 
 Due to the presence of degenerate coefficients in the partial differential equation, numerical simulation
of cloaking is a challenging (and interesting) problem, see e.g. \cite {HL,FEMc1,FEMc2}
and references therein. 
 Next we will consider cloaking starting directly from the FEM model
 for the background space and construct cloaking models using only
 the FEM models and the corresponding resistor networks. We study
in this paper only the static (i.e.\ zero frequency) cloaking and leave
open the problem how the corresponding constructions could be done
for the Helmholtz equation or time-harmonic Maxwell's equations.  }
\end{subsection}

\noindent
\begin{subsection}{ Invisibility cloaking for the discrete model}

In this subsection we construct resistor networks and their equivalent FEM models with cloaking properties. Consider first the resistor network with graph given by
\begin{center}


\begin{tikzpicture}[scale = 0.7]
    \tikzstyle{every node}=[draw,circle,fill=white,minimum size=4pt,
                            inner sep=0pt]

    \draw (0,0) node (1) [label=above:$ x_{1}$] {}
        -- ++(0:4.0cm) node (3) [label=above:$ x_{3}$] {}
        -- (1) 
        -- ++(120:-4.0cm) node (2) [label=above:$ x_{2}$] {}
        -- ++(3);
\draw(1)
--++ (60:4cm) node(5)[label = above: $x_5$]{};
\draw --++ (5) --++(3);
\draw --++(3) --++(1);
\draw(5)
 --++ (4,0cm) node(4) [label=right:$x_4$] {} ;
 \draw(5)
 --++(180: 4cm) node(6) [label = above: $x_6$]{};
\draw --++ (4) --++(3);
\draw --++ (1) --++(6);
\end{tikzpicture}
\end{center}
{We call this graph consisting of four equilateral triangles the graph of type $B$ (B refers to background) and
say that $(x_1x_3x_5)$ is the basic triangle of the graph.}
We consider the graph B as a resistor network by assigning 
 resistivity $1$ on all edges of the graph. 
 
Also, we consider the second  graph, shown below, that is an extension
of the above graph of type B.
\begin{center}
\begin{tikzpicture}[scale = 0.7]
    \tikzstyle{every node}=[draw,circle,fill=white,minimum size=4pt,
                            inner sep=0pt]

    \draw (0,0) node (1) [label=above:$ x_{1}$] {}
        -- ++(0:2.0cm) node (8) [label=below:$ x_{8}$] {}
        -- (1) 
        -- ++(300:4.0cm) node (2) [label=above:$ x_{2}$] {};

\draw (8)
        -- ++(0:2.0cm) node (3) [label=above:$ x_{3}$] {};
\draw--++(3) --++(2);
\draw(3)
--++ (120: 2.0cm) node(9) [label= right: $x_9$]{};

\draw(1)
--++ (60:2cm) node(7)[label = left: $x_7$]{};
\draw(7)
--++ (60:2cm) node(5)[label = above: $x_5$]{};
\begin{scope}[ultra thick]
\draw --++(9) --++(5);
\draw (9) --++(300: 1.95cm);
\end{scope}
\draw(5)
 --++ (4,0cm) node(4) [label=right:$x_4$] {} ;
 \draw(5)
 --++(180: 4cm) node(6) [label = above: $x_6$]{};
\draw --++ (4) --++(3);
\draw --++ (1) --++(6);
\draw (2,0.5) node(10) [label = right :$x_{10}$]{};
\draw (1.45,1.47) node(11)[label =above: $x_{11}$ ]{};
\draw (2.55,1.47) node(12)[label =above: $x_{12}$ ]{};

\begin{scope}[dashed]
\draw --++ (6) --++(7);
\draw --++ (2) --++(8);
\draw --++ (4) --++(9);
\draw (8) --++(90: 0.5cm);
\draw (7) --++ (330:0.5cm);
\draw (9) --++ (210: 0.5cm);
\draw --++(1) --++(10);
\draw --++(1) --++(11);
\draw --++(5) --++(11);
\draw --++(3) --++(10);
\draw --++(3) --++(12);
\end{scope}
\draw --++(5) --++(12);

\begin{scope}[ultra thick]
\draw --++(5) --++(7) ;
\draw --++ (7) --++(1);
\draw --++(1) --++ (8);
\draw --++(8) --++(3);
\end{scope}
\begin{scope}[ultra thick, dotted]
\draw --++(10) --++(11);
\draw --++(12) --++(10);
\draw (12) --++(11);
\end{scope}
\end{tikzpicture}
\end{center}

This graph is formed by choosing points 
$x_{10}$, $x_{11}$, and $x_{12}$ to be on the line segments connecting
the points $x_{2}$, $x_{6}$, and $x_{4}$ to the centroid  $A$ of 
the triangle  $(x_{2},x_{6},x_{4})$ so that the points 
$x_{10}$, $x_{11}$, and $x_{12}$ have the equal distances to $A$.

Note that in this graph no triangle has an angle $> \pi/2$.
We consider  also this graph as a resistor network by assigning 
the resistivity by the following rules. On regular lines the resistivity is $1$, on dashed lines the resistivity is zero, on bold lines the resistivity is $2$, and on the inner triangle the dotted edges have arbitrary strictly positive resistivity.  Observe that the edge $x_5x_{12}$ has the resistivity $1$. 
We call this graph consisting of 16 triangles the graph of type $C$ (C refers to cloak) and
say that $(x_1x_3x_5)$ is the {\it basic triangle} corresponding to the {\it center triangle} $(x_{10}x_{11}x_{12})$.

In the following, we consider one fixed graph of type B and another
graph of type C that is an extension of the graph B, and refer to these graphs and the corresponding
resistor networks and 
as the graph B and C and the resistor network B and C.

By
using the construction given in the proofs of Theorems  \ref{network fem equivalence}
and \ref{network fem equivalence2}
with $\alpha_{j,k} = \gamma_{j,k}/2$
we can construct a piecewise constant positive semi-definite conductivity $\sigma^C
\in PC(\mathscr{T}^C)$ with
triangularization $\mathscr{T}^C$ given by the graph C. Note that
the conductivity  $\sigma^C$ in everywhere non-zero.
This gives a FEM model
 that corresponds to the resistor network C. Note that the constant
 conductivity $\sigma^B=1$ with
triangularization  $\mathscr{T}^B$ given by the graph B gives a FEM model
 that  corresponds to the resistor network B.

\begin{prop}
\label{cloak}
The resistor networks B and C with the 
set of boundary vertices $V_B=\{x_1,x_2,x_3,x_4,x_5,x_6\}$
have the same Dirichlet-Neumann map. Moreover, the corresponding
 the Dirichlet-Neumann maps  for the FEM models satisfy
 \beq\label{eq: cloaking FEM}
\Lambda_{\sigma^C}^{\text{FEM}( \mathscr{T}^C)} = \Lambda_{\sigma^B}
^{\text{FEM}( \mathscr{T}^B)}.
\eeq
\end{prop}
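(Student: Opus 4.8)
The plan is to split the proof into two stages: first I would show that the two resistor networks $B$ and $C$ have the same Dirichlet-to-Neumann map, and then deduce the FEM identity \eqref{eq: cloaking FEM} from the equivalence of FEM models and resistor networks already established in this section. The second stage is immediate: by Theorem~\ref{thm_main_fem_to_resistor} the background model $\sigma^B\equiv 1$ on $\mathscr T^B$ satisfies $\Lambda^{\mathrm{FEM}(\mathscr T^B)}_{\sigma^B}=\Lambda_{\gamma^B}$, and the cloak conductivity $\sigma^C$ was built, via Theorems~\ref{network fem equivalence} and~\ref{network fem equivalence2} with the even splitting $\alpha_{jk}=\gamma_{jk}/2$, precisely so that $\Lambda^{\mathrm{FEM}(\mathscr T^C)}_{\sigma^C}=\Lambda_{\gamma^C}$. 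Hence once the two network maps agree, \eqref{eq: cloaking FEM} follows at once, so all the work is in the first stage.

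For the network statement I would compute $\Lambda_\gamma$ as the Schur complement of the (symmetric, zero-row-sum) weighted graph Laplacian onto the boundary set $V_B=\{x_1,\dots,x_6\}$, and reduce $C$ to $B$ by eliminating the six interior vertices $x_7,\dots,x_{12}$ in two groups. The essential point is that the cloaked object, the inner triangle $(x_{10}x_{11}x_{12})$ carrying the arbitrary positive (dotted) conductances, is joined to the rest of $C$ by a single resistor of nonzero conductivity, namely $x_5x_{12}$: every other edge meeting $x_{10},x_{11},x_{12}$ (the radial edges $x_8x_{10},x_7x_{11},x_9x_{12}$ and the edges to the corners $x_1,x_3,x_5$) has conductivity $0$ and may be deleted without affecting any current. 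Summing Kirchhoff's law over $x_{10},x_{11},x_{12}$, the internal currents cancel and only the current through $x_5x_{12}$ survives, so it must vanish for every boundary datum; thus no current ever enters the inner triangle and its conductances are invisible at the boundary. Eliminating $x_{10},x_{11},x_{12}$ therefore leaves the boundary response unchanged and independent of the cloaked conductivity.

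After discarding the cloaked cluster together with all zero-conductivity edges, the remaining interior vertices $x_7,x_8,x_9$ (the midpoints of the sides of the basic triangle $(x_1x_3x_5)$) each have degree two: $x_7$ joins $x_1$ and $x_5$, $x_8$ joins $x_1$ and $x_3$, and $x_9$ joins $x_3$ and $x_5$, in every case through two bold edges of conductivity $2$. Eliminating a degree-two interior node is the elementary series reduction, replacing it by one edge of conductivity $\tfrac{2\cdot 2}{2+2}=1$ between its neighbors, so we recover the three sides $x_1x_3$, $x_3x_5$, $x_1x_5$ with conductivity $1$. Together with the six outer edges $x_1x_2,x_2x_3,x_3x_4,x_4x_5,x_5x_6,x_6x_1$, which carry conductivity $1$ in both networks, this is exactly the network $B$. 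Since these eliminations are instances of Schur-complement reduction of the Laplacian they preserve the Dirichlet-to-Neumann map, giving $\Lambda_{\gamma^C}=\Lambda_{\gamma^B}$; the same reduction shows the interior block of $\tilde L(\gamma^C)$ is invertible (the nodes $x_7,x_8,x_9$ decouple into invertible $1\times1$ blocks and the cluster block is grounded through $x_5x_{12}$), so $0$ is not a Dirichlet eigenvalue and both maps are genuinely defined.

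The conceptual steps above---removing a pendant cluster reached by a single conductor, deleting zero-conductance edges, and series-reducing degree-two nodes---are all routine once the combinatorics are in hand. I expect the only real obstacle to be a careful reading of the two figures: one must verify from the prescribed line styles that the inner triangle is connected to the network through exactly one conducting edge, and that, after the zero-conductance edges are removed, each of $x_7,x_8,x_9$ indeed has degree two with two conductivity-$2$ edges. Everything after that is bookkeeping.
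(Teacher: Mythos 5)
Your proposal is correct and is in substance the paper's own argument: the decisive step---summing the Kirchhoff equations at $x_{10}$, $x_{11}$, $x_{12}$ so that the internal currents cancel and the current on the single conducting edge $x_5x_{12}$ must vanish (equivalently $u(x_{12})=f(x_5)$)---is exactly the computation in the paper's proof, and your series reduction at $x_7,x_8,x_9$ (two conductance-$2$ edges giving effective conductance $\tfrac{2\cdot 2}{2+2}=1$) is precisely the ``explicit computation'' the paper invokes for the boundary vertices $x_1,x_2,x_3,x_4,x_6$, after which the FEM identity \eqref{eq: cloaking FEM} follows from the equivalence theorems just as in the paper. Your Schur-complement packaging, and in particular the block-diagonal verification that the interior block of $\tilde{L}(\gamma^C)$ is invertible (the nodes $x_7,x_8,x_9$ decouple and the cluster block is grounded through $x_5x_{12}$), is if anything slightly more careful than the paper's brief appeal to connectedness after removing the zero-conductivity edges.
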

\begin{proof} 
Both graphs B and C, when edges with zero conductivity is removed, remain connected graphs. Therefore, the conductivity boundary value problem for both graphs are well-posed and the Dirichlet-Neumann map is well defined. Let $\Lambda_1$ and $\Lambda_2$ be the Dirichlet-to-Neumann map associated with the  graph B and C, respectively.\\
Explicit computation shows that for all $f : V_B \to \mR$, $(\Lambda_1 f)(x_j) = (\Lambda_2 f)(x_j)$ for $j = 1,2,3,4,6$. Now 
\begin{eqnarray}
\label{eq at x5}
(\Lambda_2 f)(x_5) = (\Lambda_1 f)(x_5) + f(x_5) - u(x_{12}).
\end{eqnarray}
However the conductivity equation states that 
\ba
& & f(x_5) - u(x_{12}) = \gamma_{10,12}(u(x_{12}) - u(x_{10}))+  \gamma_{11,12}(u(x_{12}) - u(x_{11})),
\\
& & 0 =  \gamma_{10,11}(u(x_{11}) - u(x_{10}))+  \gamma_{11,12}(u(x_{11}) - u(x_{12})),\\
& & 0 =  \gamma_{10,12}(u(x_{10}) - u(x_{12}))+  \gamma_{10,11}(u(x_{10}) - u(x_{11}))\ea
These three equations combine to give that $ f(x_5) - u(x_{12}) = 0$. With this information we can deduce from (\ref{eq at x5}) that $(\Lambda_2 f)(x_5) = (\Lambda_1 f)(x_5) $.

By the above,
the resistor networks B and C
have the same Dirichlet-Neumann map. Equation
(\ref{eq: cloaking FEM}) follows from this by Theorem  \ref{network fem equivalence}.
\end{proof}

Proposition \ref{cloak} means that in the FEM model corresponding to $\sigma^C$ the center triangle is cloaked. Below we use this conductivity to hide more general bodies.

\end{subsection}

\begin{figure}
\centerline{\includegraphics[width=9cm]{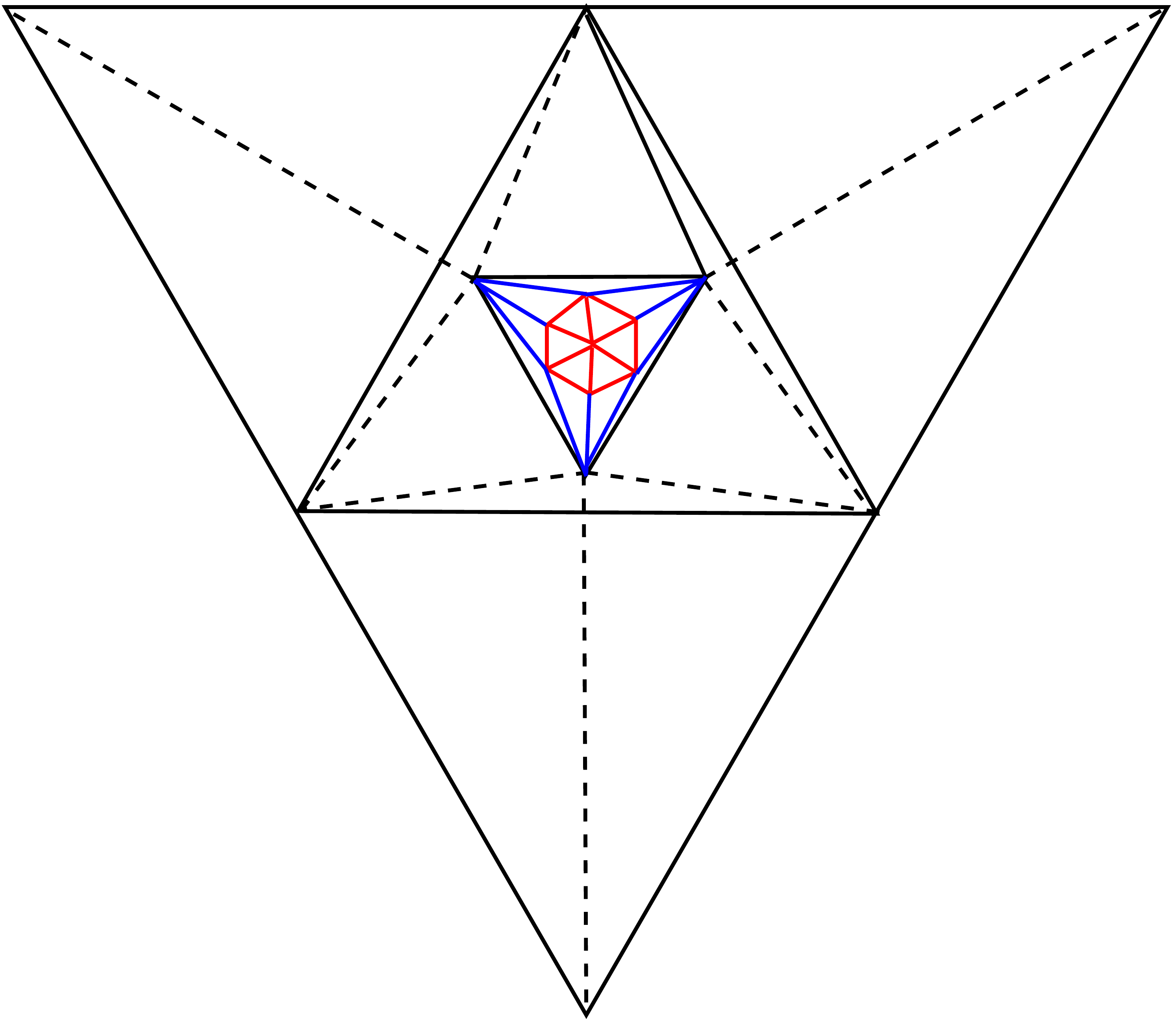}}
\caption{In the figure a general body $\Omega_*$ is coated with a cloaking 
structure. The red triangles are  a triangulation of $\Omega_*$. This triangulation
is extended with blue edges to form a triangulation of the triangle $T_0$
that is the center triangle in the resistor network corresponding to the graph C.
Transforming the resistor network to a FEM model, we obtain a conductivity where
 the piecewise constant conductivity $\sigma_*$ in the domain $\Omega_*$ 
 is surrounded with a positive-semidefinite conductivity that cloaks the presence of
 the conductivity in $\Omega_*$.  }
\end{figure}
\begin{subsection}{Cloaking of a more complicated body}

Let us now consider 
 the four triangles $T_1,T_2,T_3,T_4$ that form the graph  $B$ with $T_1$ being the basic  triangle
 and let triangle $\overline\Omega\subset \mR^2$ be the union of these triangles.
 
Similarly to the above let 
 $\mathscr{T}^B$ be the triangulation $\{T_1,T_2,T_3,T_4\}$ of $\overline \Omega$ and
$\sigma^B$ be the  conductivity that is constant 1 in all these triangles.

Let us also consider the graph C with the same boundary vertexes as the graph B
and let $T_0$ denote the center triangle of the graph C.
 Recall that above we assigned arbitrary positive resistivities  on the edges of the center triangle of the graph C and
  constructed a FEM model with conductivity $\sigma^C
  \in PC(\mathscr{T}^C)$.

Let $\Omega_*\subset \mR^2$ be a convex polygonal domain having some triangulation $\mathscr{T}_{*}$ and 
assume that $\overline \Omega_*\subset
T_0^{int}$ where $T_0$ is the center triangle of the graph $C$.  Let $\sigma_*\in PC_+(\mathscr{T}_*)$ be  an arbitrary piecewise constant conductivity in $\Omega_*$.

Let $\mathscr{T}_0$ denote the 
triangulation of $T_0$ that contains all triangles in $\mathscr{T}_*$
and all triangles in $T_0\setminus \Omega_*$ that are obtained by
connecting all boundary  vertexes of $\Omega_*$ to the vertexes of
$T_0$ with such line segments that intersect $\Omega_*$ only at one end point
of the line segment. In Figure 3 the red triangles correspond to the triangles
in  $\mathscr{T}_*$ and the blue line segments and the boundary edges of $\Omega_*$
define the other triangles in  $\mathscr{T}_0$.

The pair $(\Omega_*,\sigma_*)$ corresponds to a  body we want to hide from observations.
Let us define in $T_0$ the conductivity $\sigma_0\in PC_+(\mathscr{T}_0)$ so that
$\sigma_0$ coincides with $\sigma_*$ in $\Omega_*$ and is an arbitrary
positive definite piecewise constant conductivity in $T_0\setminus \Omega_*$.
Let $\tilde {\mathscr{T}}$ be a  triangulation of  $\overline \Omega$ that 
is the union of $\mathscr{T}^C\setminus\{T_0\}$ and $\mathscr{T}_0$.
Finally, let $\tilde{ \sigma}\in PC(\tilde{ \mathscr{T}})$ by the conductivity
that coincides with $\sigma_0$ in $T_0$ and with the cloaking conductivity $\sigma^C$ in $\Omega\setminus T_0$.
The FEM model $(\tilde{  \mathscr{T}},\tilde{ \sigma})$
corresponds to a resistor network obtained by starting from a resistor network of type
$C$, with some positive resistivities on the edges of the central triangle $T_0$, and  adding some additional vertexes and edges inside the triangle $T_0$.
In the resistor network corresponding the FEM model $(\tilde{  \mathscr{T}},\tilde{ \sigma})$ 
the solution of the equation (\ref{resistor_dp}) is constant in all vertexes that
are inside or on the boundary of the triangle $T_0$.  
Thus the exactly same arguments that were used to prove Prop.\ \ref{cloak}
give the following result:

\begin{thm} The conductivity $\tilde{ \sigma}\in  PC(\tilde{\mathscr{T}})$
is positive semidefinite and 
satisfies $\tilde{ \sigma}|_{\Omega_*}=\sigma^*$.
Moreover,
for the FEM models $(\tilde{  \mathscr{T}},\tilde{ \sigma})$ 
and $(\mathscr{T}^B,{\sigma^B})$ all boundary measurements
coincide in the sense that 
\label{FEM-cloaking}
 \beq\label{eq: cloaking FEM general}
\Lambda_{\tilde{ \sigma}}^{\text{FEM}(\tilde{  \mathscr{T}})} = \Lambda_{\sigma^B}
^{\text{FEM}( \mathscr{T}^B)}.
\eeq
\end{thm}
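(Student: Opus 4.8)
The plan is to pass to the resistor-network picture and then rerun, almost verbatim, the current-balance argument from the proof of Proposition~\ref{cloak}. The two pointwise assertions about $\tilde\sigma$ are immediate from its construction: since $\tilde\sigma=\sigma_0$ on $T_0$ and $\sigma_0=\sigma_*$ on $\Omega_*$ we get $\tilde\sigma|_{\Omega_*}=\sigma_*$; and since $\tilde\sigma$ equals the positive semidefinite cloaking conductivity $\sigma^C$ on $\Omega\setminus T_0$ and the positive definite $\sigma_0$ on $T_0$, it is positive semidefinite on all of $\Omega$. The real content is the boundary identity \eqref{eq: cloaking FEM general}.

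First I would apply Theorem~\ref{thm_main_fem_to_resistor_general} to replace the FEM model $(\tilde{\mathscr{T}},\tilde\sigma)$ by its equivalent resistor network $\tilde R=(V,E,V_B,\tilde\gamma)$, for which $\Lambda_{\tilde\gamma}=\Lambda_{\tilde\sigma}^{\mathrm{FEM}}$, while $(\mathscr{T}^B,\sigma^B)$ corresponds to the network $B$. The structural fact that makes the cloak work, to be read off from graph $C$, is that once the edges of conductivity zero are deleted, the whole subnetwork carried by $\overline{T_0}$ --- the corners $x_{10},x_{11},x_{12}$ together with every vertex of $\mathscr{T}_0$ lying strictly inside $T_0$ --- is joined to the rest of $\tilde R$ only through the single edge $x_5x_{12}$, which has conductivity $1$. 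Fixing boundary data $f$ and letting $u$ solve \eqref{resistor_dp} on $\tilde R$, every vertex of $\overline{T_0}$ is an interior vertex and hence satisfies Kirchhoff's law at $u$.

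The heart of the proof is then the same summation as in Proposition~\ref{cloak}. Adding the Kirchhoff equations over all vertices of $\overline{T_0}$, every edge internal to $\overline{T_0}$ cancels and the deleted crossing edges contribute nothing, leaving only $\gamma_{12,5}\bigl(u(x_{12})-u(x_5)\bigr)=u(x_{12})-f(x_5)$; hence $u(x_{12})=f(x_5)$, the exact analogue of $f(x_5)-u(x_{12})=0$ there. Equivalently I would exhibit the solution outright: let $u^C$ be the solution on network $C$ and set $\hat u$ equal to $f(x_5)$ at every vertex of $\overline{T_0}$ and equal to $u^C$ at $x_1,\ldots,x_9$. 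At the outer interior vertices $x_7,x_8,x_9$ the neighbours and conductivities are unchanged from $C$, so Kirchhoff holds; at every vertex of $\overline{T_0}$ all potential differences vanish, the crossing edge contributing $\hat u(x_{12})-\hat u(x_5)=0$, so Kirchhoff holds trivially. Provided $0$ is not a Dirichlet eigenvalue of $\tilde R$, uniqueness forces $u=\hat u$. That last point I would settle as in the lemmas of Sections~\ref{sec_fem}--\ref{sec_resistor}: $\int_\Omega\tilde\sigma\nabla w\cdot\nabla w\,dx\geq0$ by semidefiniteness, and it vanishes only when $\nabla w=0$ on $T_0$ (because $\sigma_0>0$, so $w$ is constant on $\overline{T_0}$) and $w$ agrees across every outer edge (each of positive conductivity $1$ or $2$); connectedness of the outer graph to the boundary, together with the unit edge $x_5x_{12}$, then forces $w\equiv0$ and so $L(\tilde\sigma)$ is invertible.

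With $u=\hat u$, the solution agrees with $u^C$ at every boundary vertex and at every neighbour of a boundary vertex --- the only such neighbours inside $T_0$ being $x_{10},x_{11},x_{12}$, reached through deleted edges or through $x_5x_{12}$ --- and the boundary conductivities $\tilde\gamma_{jk}$ coincide with those of $C$; hence $\Lambda_{\tilde\gamma}f(x_j)=\Lambda_{\gamma^C}f(x_j)$ for $j=1,\ldots,6$, that is $\Lambda_{\tilde\gamma}=\Lambda_{\gamma^C}$. Proposition~\ref{cloak} gives $\Lambda_{\gamma^C}=\Lambda_{\gamma^B}$, and Theorem~\ref{network fem equivalence} promotes this equality of resistor maps to the FEM identity \eqref{eq: cloaking FEM general}. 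The main obstacle is not the algebra but verifying the two structural claims on which everything rests: that deleting the zero edges leaves the body subnetwork attached to the boundary only by the single unit edge $x_5x_{12}$, and that splicing in the arbitrary triangulated body $(\mathscr{T}_0,\sigma_0)$ leaves the outer network, together with its boundary conductivities $\tilde\gamma_{jk}$, identical to those of $C$. These demand careful bookkeeping of the edges of $C$ around the center triangle and of how $\mathscr{T}_0$ is attached at the corners $x_{10},x_{11},x_{12}$; once they are confirmed, the zero-current conclusion $u(x_{12})=f(x_5)$ and the reduction to Proposition~\ref{cloak} are automatic.
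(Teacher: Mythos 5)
Your proposal is correct and follows essentially the same route as the paper, which likewise passes to the equivalent resistor network, observes that the solution of \eqref{resistor_dp} is constant on all vertices inside or on the boundary of $T_0$, and then reruns the current-balance cancellation from Proposition \ref{cloak} to conclude $u(x_{12})=f(x_5)$ and hence \eqref{eq: cloaking FEM general}. The paper leaves the well-posedness and bookkeeping points you flag implicit, so your explicit candidate solution $\hat u$ and quadratic-form uniqueness check merely fill in details the authors compress into ``the exactly same arguments'' as Proposition \ref{cloak}.
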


Summarizing the above construction, we can construct   a piecewise constant conductivity
$\tilde \sigma \in PC(\tilde {\mathscr{T}})$ for which the Dirichlet-to-Neumann maps
for the FEM models $(\tilde {\mathscr{T}},\tilde \sigma)$ and $( \mathscr{T}^B, \sigma^B)$ 
coincide and $(\tilde { \mathscr{T}},\tilde \sigma)$ contains the body  $(\Omega_*,\sigma_*)$.
This example can be considered as discrete invisibility cloaking of the body  $(\Omega_*,\sigma_*)$.
Note that in this example the "cloaking layer" around the body  $(\Omega_*,\sigma_*)$ is quite
thick, but analogously to the PDE cloaking examples, one can consider 
a piecewise affine diffeomorphism  $\Phi: \overline{\Omega} \to \overline{\Omega}_1$ to some suitable
domain $\Omega_1$, such that $\Phi$  is identity in $\Omega_*$. 
Using such diffeomorphisms one can obtain other cloaking examples 
where the domain $\Omega_1$ is a smaller neighborhood of $\Omega_*$. However, we 
do not consider any such particular examples in this paper.

Note that  by Lemma \ref{positivity} the conductivity matrices in $\tilde \sigma$ on some triangles will be only positive semi-definite  but not strictly positive definite. Analogues to the continuous PDE-based cloaking in \cite{GKLU6,KSVW} we will remedy this by constructing an approximate cloaking resistor network which will allow us to obtain a positive definite equivalent FEM model.
Indeed, by setting in the above constructions all the dashed lines in graphs of type $C$ to have conductivity $\epsilon >0$, we obtain a positive definite
 conductivity $\tilde \sigma(\epsilon) \in  PC_+(\tilde{\mathscr{T}})$. Since  $ \Lambda_{\tilde{ \sigma}(\epsilon)}^{\text{FEM}(\tilde{  \mathscr{T}})}  |_{\epsilon=0}= \Lambda_{\sigma^B}
^{\text{FEM}( \mathscr{T}^B)}
$, we can argue by continuity that $$\lim\limits_{\epsilon\to 0} 
 \Lambda_{\tilde{ \sigma}(\epsilon)}^{\text{FEM}(\tilde{  \mathscr{T}})} = \Lambda_{\sigma^B}
^{\text{FEM}( \mathscr{T}^B)}.$$ 
Hence, approximate cloaking for  FEM models is possible with positive definite conductivities.

\end{subsection}

\end{section}

\end{document}